\newtheorem{theorem}{Theorem}
\newtheorem{lemma}[theorem]{Lemma}
\numberwithin{theorem}{section}
\newtheorem{proposition}[theorem]{Proposition}
\theoremstyle{definition}
\newtheorem{definition}[theorem]{Definition}
\newtheorem{remark}[theorem]{Remark}
\numberwithin{equation}{section} 
\newcommand{\al}{\alpha}
\newcommand{\la}{\lambda}
\newcommand{\La}{\Lambda}
\newcommand{\CC}{\mathbb{C}}
\newcommand{\DD}{\mathbf{D}}
\newcommand{\RR}{\mathbb{R}}
\newcommand{\fa}{\mathfrak{a}}
\newcommand{\fg}{\mathfrak{g}}
\newcommand{\fk}{\mathfrak{k}}
\newcommand{\fp}{\mathfrak{p}}
\newcommand{\half}{\frac{1}{2}}
\newcommand{\thalf}{\tfrac{1}{2}}
\newcommand{\Ad}{\mathrm{Ad} \,}
\renewcommand{\Re}{\mathrm{Re} \,}
\renewcommand{\Im}{\mathrm{Im} \,}
\title[Spherical functions]{Spherical functions on Riemannian symmetric spaces}
\author{Sigurdur Helgason}
\dedicatory{Dedicated to Professor Gestur \'{O}lafsson on his 65th birthday.}
\keywords{symmetric spaces, spherical functions, zonal spherical functions, Harish Chandra's  $ c $ function} 
\subjclass[2010]{Primary  43A90, 53C35, Secondary  22E30, 22E46}
\begin{document}
\maketitle

\section{Introduction}
Let $ X = G/K $ be a symmetric space where $ G $ is a connected noncompact semisimple Lie group with finite center and $ K $ a maximal compact subgroup. Let $ \mathbf{D}(X)  $ denote the algebra of $ G $-invariant differential operators on $ X. $ Let $ \delta $ be a unitary irreducible representation of $ K $ on a vector space  $ V_\delta.  $

\begin{definition}
\label{def1.1}
A spherical function of $ K $ type $ \delta $ is a $ C^\infty $ function $ \Phi : X \rightarrow \mathrm{Hom}\, (V_\delta, V_\delta) $ satisfying the following conditions.
\begin{equation}
\label{1.1}
 \Phi  \text{ is an eigenfunction of each }  D \in \mathbf{D}(X) 
\end{equation}
\begin{equation}
\label{1.2}
 \Phi (k \cdot x) = \delta (k) \Phi (x) \quad k \in K, x \in X. 
\end{equation}

On the right we have multiplication in $ \text{Hom} \, (V_\delta, V_\delta). $
\end{definition}

\begin{remark}
This definition has some similarity with those of Godement \cite{G52} and Harish-Chandra \cite{HC72}. However, the first one is modeled after invariance under $ g \rightarrow kgk^{-1}; $ the second deals with bi-invariant differential operators on $ G $ and a double representation of $ K \times K. $ Our definition by conditions \eqref{1.1} and \eqref{1.2} is thus rather different. 

It is also unrelated to a definition of a spherical function by Tirao \cite{T76} as a function characterized by the function equation (3.3) for zonal spherical functions but with the measure $ dk $ replaced by $ \chi (k) \, dk $ where $ \chi $ is the character of a representation of $ K. $
\end{remark}

Our definition (\ref{def1.1}) stresses spherical functions as functions on $ X $ rather than as $ K $-right invariant functions on $ G. $ I remark that Harish-Chandra's major papers \cite{HC58} on zonal spherical functions do not actually mention the space $ X = G/K $ nor the algebra $ \DD (X). $

This paper deals with some simple results about the functions satisfying \eqref{1.1} and \eqref{1.2}, namely new integral formulas, new results about behavior at infinity and some facts about the related $ C_\sigma $ functions. 

Appreciation to G. Olafsson, A. Pasquale, D. Vogan, and J. A. Wolf for helpful comments. I am also grateful to the referee for several corrective suggestions. 

\section{Notation and background}
As usual $ \mathbf{R}, \mathbf{C}, $ and $ \mathbf{Z} $ denote the sets of real numbers, the complex numbers and integers, respectively. If $ c = a + ib, \ a, b \in \mathbf{R} $ we write $ a = \Re c, b = \Im c  $ and $ \bar{c} = a - ib.  $ If $ L $ is a Lie group with Lie algebra $ \mathfrak{l}, \exp: \mathfrak{l} \rightarrow L $ denotes the exponential mapping and ad (resp. Ad) the adjoint representation of $ \mathfrak{l} $ (resp. $ L $).

Going back to \S 1, our group $ G $ has a Lie algebra $ \fg $ with Cartan decomposition $ \fg = \fk + \fp $ where $ \fk $ is the Lie algebra of $ K $ and $ \fp $ is the orthocomplement of $ \fk $ relative to the Killing form $ B $ of $ \fg. $ We fix a maximal abelian subspace $ \fa $ of $ \fp $ and fix a Weyl chamber $ \fa^+ \subset \fa. $ All such choices are conjugate under $ \Ad (K). $ The choice of $ \fa^+  $ induces Iwasawa decompositions $ G = NAK $ and $ G = KAN $ where $ A = \exp \fa $ and $ N $  is nilpotent. In these decompositions we write $ g = n \exp A(g)k, g = k_1 \exp H(g) n_1 $ where $ A(g) $ and $ H(g) $ are uniquely determined in $ \fa $ and $ A(g) = - H(g^{-1}). $


If $ M $ is the centralizer of $ A $ in $ K $ the ``vector valued'' inner product $ A(gK, kM) = A(k^{-1}g) $ is well defined and considered analog to the Euclidean $ (x,w) $ the distance from 0 to the hyperplane through $ x $ with unit normal $ w $. We put $ B = K/M. $ If $ \widehat{K} $ denotes the set of irreducible representations of $ K, $ condition \eqref{1.2} implies that $ \delta(M) $ has a common fixed point; we denote by $ \widehat{K}_M $ the set of these $ \delta.  $

Let $ V_\delta $ denote the space on which $ \delta $ operates and $ V^M_\delta $ the subspace of fixed points under $ \delta(M). $ We also use the notation $ \mathcal{E}(X) $ (resp. $ \mathcal{D}(X) $) for the space of $ C^\infty $ functions on $ X $ (resp. those of compact support).

We denote by $ \pi $ the natural map of $ G $ on $ G/K $ and put $ \tilde{f} = f \circ \pi $ for a function $ f $ on $ G/K. $ We also denote by $ \circ $ the coset $ eK. $

Let $ \fa^* $ (resp. $ \fa^*_\CC $) be the space of $ \mathbf{R} $-linear maps of $ \fa $ into $ \mathbf{R} $ (resp. $ \mathbf{C} $). In the bijection of $ \fa $ with $ \fa^* $ via the Killing form of $ \fg $ let $ \fa^*_+ $ correspond to $ \fa^+ $. Let $ S(\fa) $ denote the symmetric algebra over $ \fa $ and $ I(\fa) $ the subspace of $ p \in S(\fa) $ invariant under the Weyl group $ W.  $ We put $ \bar{N} = \theta N $ if $ \theta  $ is the Cartan involution. Let $ M' $ be the normalizer of $ A $ in $ K. $ If $ \sigma \in W $ and $ m_\sigma $ representing $ \sigma $ in $ M' $ we put 
\[ \bar{N}_\sigma = \bar{N} \cap m^{-1}_\sigma N m_\sigma.  \]
This group appears later.

\section{Zonal spherical functions}
%


A \textbf{zonal spherical function} $ \phi $ on $ G $ is a $ C^\infty $ function on $ G $ satisfying 
\begin{equation}
\label{3.1}
 \phi  \mbox{ is an eigenfunction of each } D \in \mathbf{D}_K(G). 
\end{equation}
\begin{equation}
\label{3.2}
 \phi \mbox{ is bi-invariant under } K, \ \phi (e) = 1. 
\end{equation}

Here $ \mathbf{D}_K (G) $ is the algebra of differential operators on $ G $ which are left invariant under $ G $ and right invariant under $ K. $

Properties \eqref{3.1} and \eqref{3.2} are well known to be equivalent to 
\begin{equation}
\label{3.3}
 \int_{K} \phi (xky) dk = \phi (x) \phi (y). 
\end{equation}

%

The zonal spherical functions are all given by Harish-Chandra's formula (\cite{HC54})
\begin{equation}
\label{3.4}
\phi (g) = \int_K e^{(i \la - \rho)(H(gk))} dk
\end{equation}
for some $ \la \in \fa^*_C. $ Here $ \rho $ is half the sum of the positive restricted roots with multiplicity. Writing $ \phi = \phi_\la $ this function has from \cite{HC58} an expansion 

\begin{equation}
\label{3.5}
\phi_\la (a) = \sum_{s \in W} \mathbf{c} (s \la) e^{(is \la - \rho)(\log a)} \sum_{\mu \in \La} \Gamma (s \la) e^{-\mu (\log a)}
\end{equation}
for $ a $ in Weyl chamber $ \exp(\fa^+), \ W $ the Weyl group, $ \La  $ the lattice 
\[ \La = \{ m_1 \al_1 + \cdots + m_\ell \al_\ell \ | \ m_i \in \mathbf{Z}^+ \}, \]
the $ \al_1, \ldots, \al_\ell $ being the simple restricted roots. This is where the remarkable $ \mathbf{c} $ function first appears. 

The $ \Gamma $ are rational functions on $ \fa^*_\mathbf{c} $ and $ \mathbf{c} $ is a meromorphic function on $ \fa^*_\mathbf{c} $ given by Harish-Chandra \cite{HC58} as the integral
\begin{equation}
\label{3.6}
\mathbf{c} (\la) = \int_{\bar{N}} e^{-(i\la + \rho) (H (\bar{n}))} d \bar{n}.
\end{equation}

Through the work of Harish-Chandra \cite{HC58}, Bhanu-Murthy \cite{BM60} and Gindikin-Karpelevic \cite{GK62} the $ \mathbf{c} $ function is given by
\begin{equation}
\label{3.7}
\mathbf{c} (\la) = \prod_{\al \in \sum_{0}^{+}} \frac{2^{-\langle i \la - \rho, \al_0 \rangle} \Gamma \left( \frac{1}{2} (m_\al + m_{2 \al} + 1)\right) \Gamma (\langle i \la, \al_0 \rangle )}{\Gamma \left( \half (\half m_\al + 1 + \langle i\la, \al_0 \rangle ) \right) \Gamma \left( \half (\half m_\al + m_{2 \al} + \langle i \la, \al_0 \rangle)  \right) }.
\end{equation}

Here $ \sum_{0}^{+} $ denotes the set of positive, indivisible roots, $ m_\al $ the multiplicity of $ \al $ and $ \al_0 = \al / \langle \al, \al \rangle. $

In Harish-Chandra's work,  $ | \mathbf{c} (\la)|^{-2} $ served as the dual measure for the \textbf{spherical transform} on $ G. $ However, formula \eqref{3.7} has many other interesting features. See e.g. \cite{H00}.


\section{The spaces $ X = G/K $ and its Dual $ \Xi = G/MN $}

As proved in \cite{H62}, p. 439 and \cite{H70}, p. 94 the modified integrand in \eqref{3.4}, that is the function 
\begin{equation}
\label{a4.1}
gK \rightarrow e^{(i \la + \rho) (A(k^{-1}g))}, 
\end{equation}
is for each $ k $ an eigenfunction of $ \DD (X) $ and the eigenvalue is $ \Gamma (D) (i \la) $ where $ D \in \DD(X) $ and $ \Gamma (D) \in I(\fa). $ (This is related to Lemma 3 in \cite{HC58}, I) but not contained in it). The map $ \Gamma $ is spelled out in \cite{H84}, II, Theorem 5.18.

This led in \cite{H65} to the definition of a \textbf{Fourier transform} $ f \rightarrow \tilde{f}  $ for a function $ f $ on $ X, $
\begin{equation}
\label{a4.2}
\tilde{f} (\la, b) = \int_X f(x) e^{(-i\la + \rho)(A(x,b))} \, dx, \ b \in B, \la \in \fa^*_\mathbf{c}
\end{equation}
in analogy with the polar coordinate expression
\[ \tilde{F}(\la w) = \int_X F(x)e^{-i \la (x,w)}  \, dx, \ |w| = 1 \]
for the Fourier transform on $ \RR^n. $ Here $ dx $ denotes the volume element in both cases. 

In addition we consider the \textbf{Poisson transform}

\begin{equation}
\label{a4.3}
(\mathcal{P}_\la F) (x) = \int_B e^{(i\la + \rho) (A(x,b))} F(b) \, db, \ F \text{ a function on } B.
\end{equation}

These transforms are intimately related to the $ \mathbf{c} $-function. By \cite{H70}, p. 120, the map $ f \rightarrow \tilde{f} $ is an isometry of $ L^2(X) $ onto $ L^2 (\fa^*_+ \times  B; | \mathbf{c} (\la) |^{-2} \, d \la \, db). $ Secondly, $ \mathcal{P}_\la $ is related to the \textbf{denominator} $ \Gamma^+_X (\la) $ in \eqref{3.7}, called the Gamma function of $ X. $

This $ \mathcal{P}_\la $ is closely related to the dual Radon transform $ \phi \rightarrow \overset{\vee}{\phi} $ from $ \Xi $, the space of horocycles in $ X, $ to $ X $ which to a function $ \phi $ on $ \Xi $ associates $ \overset{\vee}{\phi} (x), $ the average of $ \phi $ over horicycles $ \xi \in \Xi $ passing  through $ x \in X  $ (\cite{H08}, p. 103).

The element $ \la \in \fa^*_c $ is said to be \textbf{simple} if $ \mathcal{P}_\la $ is injective. The connection with the denominator in \eqref{3.7} is (\cite{H76}):

\begin{theorem}
\label{th3.1}
$ \la $ is non-simple if and only if $ \Gamma^+_X (\la)^{-1} = 0. $
\end{theorem}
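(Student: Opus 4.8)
The plan is to convert the injectivity of $ \mathcal{P}_\la $ into a family of finite-dimensional nondegeneracy conditions, one for each $ K $-type in $ \widehat{K}_M $, and then to recognize the common obstruction to nondegeneracy as the vanishing of $ \Gamma^+_X(\la)^{-1} $. First I would use that $ \mathcal{P}_\la $ intertwines the left $ K $-action on $ B = K/M $ with the $ K $-action on $ X $; hence its kernel is a $ K $-submodule of the function space on $ B $ and therefore splits along the isotypic components indexed by $ \widehat{K}_M $. Thus $ \la $ is non-simple precisely when there is a single $ \delta \in \widehat{K}_M $ on whose $ \delta $-isotypic part $ \mathcal{P}_\la $ has nonzero kernel. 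By Frobenius reciprocity that part is modelled on $ V^M_\delta $, so the restriction of $ \mathcal{P}_\la $ to it is encoded by an $ \mathrm{End}(V^M_\delta) $-valued function $ \mathbf{Q}_\delta(\la) $, and non-simplicity at $ \delta $ amounts to $ \det \mathbf{Q}_\delta(\la) = 0 $.

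To compute $ \mathbf{Q}_\delta(\la) $ I would read off the boundary behaviour of the image. Assembling the Poisson transforms of the $ \delta $-isotypic boundary data produces a spherical function of type $ \delta $ in the sense of Definition \ref{def1.1}, and by the eigenfunction property it carries a Harish-Chandra expansion in $ \exp(\fa^+) $ parallel to \eqref{3.5}, now with $ \mathrm{End}(V^M_\delta) $-valued coefficients. The map sending the boundary datum to the coefficient of the leading exponential $ e^{(i\la - \rho)(\log a)} $ is, up to an invertible normalization, exactly $ \mathbf{Q}_\delta(\la) $, and it is given by a matrix-valued analogue of Harish-Chandra's integral \eqref{3.6} over $ \bar{N} $. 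Since an eigenfunction is determined by the full array of its expansion coefficients, invertibility of $ \mathbf{Q}_\delta(\la) $ forces injectivity on the $ \delta $-component; the reverse implication, that a degenerate leading coefficient actually produces a boundary datum annihilated by $ \mathcal{P}_\la $, is the delicate half and has to be extracted from the representation-theoretic reducibility of the principal series at $ \la $.

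The main obstacle, and the crux of the identification, is the explicit evaluation of $ \det \mathbf{Q}_\delta(\la) $ and its comparison with $ \Gamma^+_X(\la)^{-1} $. Here I would run the rank-one reduction underlying the Gindikin-Karpelevic factorization \eqref{3.7} (\cite{GK62}, \cite{HC58}): split $ \bar{N} $ along the indivisible positive roots $ \al \in \sum_0^+ $ so that the matrix integral factors into rank-one contributions, each a classical Beta--Gamma integral twisted by the restriction of $ \delta $ to the corresponding rank-one subgroup. The point is that the $ \delta $-dependence is confined to the numerator Gamma factors, while the denominator factors, those with arguments $ \half(\half m_\al + 1 + \langle i\la, \al_0\rangle) $ and $ \half(\half m_\al + m_{2\al} + \langle i\la, \al_0\rangle) $, are common to every $ \delta $ and reassemble into $ \Gamma^+_X(\la) $. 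Consequently $ \det \mathbf{Q}_\delta(\la) $ can vanish for some $ \delta \in \widehat{K}_M $ exactly when one of these denominator factors has a pole, that is, exactly when $ \Gamma^+_X(\la)^{-1} = 0 $. The step I expect to require the most care is verifying that each such pole is genuinely witnessed by some admissible $ \delta $ (so that the ``only if'' direction is not vacuous), which I would settle by exhibiting the minimal $ K $-type attached to the offending root $ \al $ and checking that the numerator factor there does not cancel the singularity.
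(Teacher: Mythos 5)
Your opening reduction is sound, and it is in fact how the proof cited by the paper (\cite{H76}; the paper itself gives no proof of Theorem \ref{th3.1}) begins: the kernel of $ \mathcal{P}_\la $ is a closed $ K $-invariant subspace, so non-simplicity is equivalent to the existence of some $ \delta \in \widehat{K}_M $ and some $ 0 \neq v \in V^M_\delta $ with $ \Phi_{\la,\delta}(\cdot)v \equiv 0, $ where $ \Phi_{\la,\delta} $ is the Eisenstein integral \eqref{5.4}. The fatal step is the next one. You define $ \mathbf{Q}_\delta(\la) $ as the coefficient of the leading exponential $ e^{(i\la-\rho)(\log a)} $, i.e.\ the matrix-valued analogue of Harish-Chandra's integral \eqref{3.6}, and claim non-injectivity on the $ \delta $-component amounts to $ \det \mathbf{Q}_\delta(\la) = 0. $ But that coefficient is $ \mathbf{C}_e(\la) $ in the notation of Theorem \ref{th5.3}, and by the paper's own formula \eqref{6.2} --- proved from the Poisson asymptotics of \cite{H08}, II, Theorem 3.16, an argument not specific to rank one --- one has $ \mathbf{C}_e(\la) = \mathbf{c}(\la)\,\mathrm{Id} $ on $ V^M_\delta $: it is scalar and completely independent of $ \delta. $ So your criterion, if carried out, would prove ``non-simple $ \iff \mathbf{c}(\la) = 0 $,'' which is a strictly smaller set than $ \{\Gamma^+_X(\la)^{-1} = 0\} $: since $ \Gamma $ never vanishes, $ \mathbf{c}(\la)=0 $ does force a denominator pole in \eqref{3.7}, but a denominator pole can be cancelled by a pole of the numerator factor $ \Gamma(\langle i\la,\al_0\rangle). $ Concretely, for $ G = SL(2,\CC) $ ($ m_\al = 2, m_{2\al} = 0 $) at $ \langle i\la,\al_0\rangle = -1 $ (i.e.\ $ i\la + \rho = 0 $), both $ \Gamma(\langle i\la,\al_0\rangle) $ and the denominator factor $ \Gamma\bigl(\half(\half m_\al + \langle i\la,\al_0\rangle)\bigr) = \Gamma(0) $ have simple poles, so $ \mathbf{c}(\la) \neq 0 $; yet $ \Gamma^+_X(\la)^{-1} = 0 $ and $ \la $ is visibly non-simple, the Poisson kernel being identically $ 1 $ there. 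Your test misses every such $ \la. $

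The object that actually detects the kernel --- and this is what \cite{H76} and \S 7 of the paper use --- is not an asymptotic coefficient at $ t \to \infty $ but the global constant $ c_{\la,\delta} $ in the closed-form evaluation \eqref{6.4}: since the hypergeometric factor equals $ 1 $ at $ t = 0 $, one has $ \Phi_{\la,\delta}(\cdot)v \equiv 0 $ if and only if $ c_{\la,\delta} = 0. $ Formula \eqref{6.5} has exactly the structure you attributed to $ \mathbf{Q}_\delta $: its denominator consists of the two $ \Gamma^+_X $-factors and is independent of $ \delta, $ while the numerator carries the $ \delta $-dependence through Kostant's integers $ r, s $; the ``witnessing'' direction then requires Kostant's classification \cite{K69} of which pairs $ (r,s) $ occur for $ \delta \in \widehat{K}_M $ --- precisely the point you flagged but did not carry out. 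Two further gaps remain even after replacing $ \mathbf{Q}_\delta $ by $ c_{\la,\delta} $: (i) what you call ``the delicate half'' is not supplied by an appeal to principal-series reducibility; it is handled by the closed formula \eqref{6.4} itself, which shows vanishing of the constant kills the whole function; and (ii) your factorization of the matrix-valued $ \bar{N} $-integral along indivisible roots is not available --- the Gindikin--Karpelevic argument does not extend naively to the $ \delta $-twisted integrals, which is exactly why \S 8 of the paper needs Schiffmann's method, reduced expressions in $ W $, and bases of $ V^M_\delta $ that change from one rank-one factor to the next.
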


On the other hand, the \textbf{numerator} in \eqref{3.7} is connected with analysis on the dual of $ X, $ that is, the space $ \Xi = G/MN $ of horocycles in $ X $. The counterpart to the zonal spherical functions on $ X $ would be the $ MN $-invariant  eigendistributions of $ \mathbf{D}(G/MN), $ the algebra of $ G $-invariant differential operators on $ \Xi.  $ These ``\textbf{conical distributions}'' have a construction and theory in \cite{H70}, using the numerator of \eqref{3.7}. 


While the set of zonal spherical functions is parametrized by $ \fa^*_c / W $ via \eqref{3.4}, the set of conical distributions turned out to be essentially parametrized by $ \fa^*_c \times W. $ 

In the proof of \eqref{3.7} the following partial $ \mathbf{c} $-function $ \mathbf{c}_\sigma $  enters for each $ \sigma \in W.  $ In analogy with \eqref{3.6}, it is defined by 
\[ \mathbf{c}_\sigma (\la) = \int_{\bar{N}_\sigma} e^{-(i \la + \rho) (H(\bar{n}))} d \bar{n}, \quad \bar{N}_\sigma = \bar{N} \cap N^{\sigma^{-1}},  \]
for a suitable normalization of $ d \bar{n} $ on $ \bar{N}_\sigma $. It has a formula generalizing \eqref{3.7},
\[ \mathbf{c}_\sigma (\la) = \prod_{\al \in \sum_{0}^{+} \bigcap \sigma^{-1} \sum_{0}^{-}} \mathbf{c}_\al (\la_\al), \quad \la_\al = \la | \fa_\al,  \]
where $ \mathbf{c}_\al $ is the $ c $-function for the group $ G_\al \subset G $ whose Lie algebra is the subalgebra of $ \fg $ generated by $ \fg_\al  $ and $ \fg_{-\al}. $

The numerous beautiful features of \eqref{3.7} are the cause of the title in \cite{H00}.

\section{Global descriptions of eigenspaces}
%

Let $ V $ be a finite dimensional vector space. A function $ \Phi: X \rightarrow V $ satisfying \eqref{1.1} is called a \textit{joint eigenfuction} of $ \DD (X). $ The eigenvalue $ \chi (D) $ in $ D\Phi = \chi (D) \Phi $ is a homomorphism of $ I(\fa) $ into $ \mathbf{C} $ and thus has the form $ \chi (D) = \Gamma (D) (i \la) $ for some $ \la \in \fa^*. $ Corresponding eigenspaces are \textit{joint eigenspaces}. The scalar version of Proposition \ref{prop5.1} is from \cite{H62}, X, \S 7. We shall use \cite{H84}, mainly Ch. II, \S 4 and Ch.IV, \S 2. 

\begin{proposition}
\label{prop5.1}
The joint eigenfunctions $ \Phi: X \rightarrow V $ of $ \DD (X) $ are the continuous functions $ \Phi $ satisfying. 
\begin{equation}\label{4.1}
\int_K \Phi (xky \cdot \circ) dk = \Phi (x \cdot \circ) \phi_\la (y \cdot \circ), \quad x,y \in G
\end{equation}
for some $ \la \in \fa^*_c. $
\end{proposition}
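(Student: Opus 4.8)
The plan is to prove both implications, exploiting that the two conditions are linear in $\Phi$ and that the target space $V$ enters only through scalar multiplication by the (scalar) zonal spherical function $\phi_\la$. Composing with an arbitrary covector $\eta\in V^*$ therefore lets me invoke the scalar results of \cite{H62} wherever uniqueness is needed; I will run the argument directly in the vector-valued setting and descend to the scalar case only for that one input. Throughout I write $\tilde\Phi=\Phi\circ\pi$ on $G$ and use that $K$ fixes the base point $\circ$, that Haar measure on the compact group $K$ is bi-invariant, and that each $D\in\mathbf{D}(X)$ is $G$-invariant, hence commutes with all left translations.

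For the forward direction (a joint eigenfunction satisfies \eqref{4.1}), suppose $D\Phi=\chi(D)\Phi$ with $\chi(D)=\Gamma(D)(i\la)$, and introduce the averaged function $F_x(y)=\int_K\Phi(xky\cdot\circ)\,dk$. Invariance of $dk$ and $k\cdot\circ=\circ$ make $F_x$ bi-$K$-invariant in $y$, so it descends to a left-$K$-invariant function $\bar F_x$ on $X$. Since $D$ commutes with the left translations $g\mapsto xk\cdot g$ and may be carried through the compact, smooth $K$-integral, $\bar F_x$ is again a joint eigenfunction with the same eigenvalue $\chi$. Now I invoke the rigidity input: a $K$-invariant joint eigenfunction on $X$ with eigenvalue $\chi$ is a scalar multiple of $\phi_\la$ (apply this to $\eta\circ\bar F_x$ for each $\eta\in V^*$). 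Hence $\bar F_x=\bar F_x(\circ)\,\phi_\la$, and evaluating at $y=e$, where $xk\cdot\circ=x\cdot\circ$ gives $\bar F_x(\circ)=\Phi(x\cdot\circ)$, yields exactly \eqref{4.1}.

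For the reverse direction (\eqref{4.1} forces \eqref{1.1}), equation \eqref{4.1} reads $\int_K\tilde\Phi(xky)\,dk=\tilde\Phi(x)\,\phi_\la(y\cdot\circ)$. I first upgrade continuity to smoothness: choose a compactly supported $C^\infty$ function $\psi$ on $G$ with $c_\psi:=\int_G\phi_\la(y\cdot\circ)\psi(y)\,dy\neq0$ (possible since $\phi_\la(\circ)=1$), multiply \eqref{4.1} by $\psi(y)$, integrate in $y$, and substitute $z=xky$ to obtain
\begin{equation}
\tilde\Phi(x)=c_\psi^{-1}\int_G\tilde\Phi(z)\Bigl(\int_K\psi(k^{-1}x^{-1}z)\,dk\Bigr)\,dz .
\end{equation}
The $x$-dependence now lives entirely inside the smooth, compactly supported kernel, so differentiation under the integral—justified on compacta in $x$ by the compact support of $\psi$ together with the continuity of $\tilde\Phi$—shows $\tilde\Phi$, hence $\Phi$, is $C^\infty$. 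With smoothness in hand I apply $D$ (lifted to $G$) in the $y$-variable: $G$-invariance moves $D$ through the left translations $L_{xk}$ and through the $K$-integral on the left, while on the right $D\phi_\la=\chi(D)\phi_\la$; setting $y=e$ and using right-$K$-invariance collapses the left side to $\widetilde{D\Phi}(x)$, giving $D\Phi=\chi(D)\Phi$ for every $D$.

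The main obstacle is the uniqueness step used in the forward direction: that a $K$-invariant joint eigenfunction with a prescribed eigenvalue is proportional to $\phi_\la$, and in particular that no nonzero such eigenfunction vanishes at $\circ$. This is exactly the content of Harish-Chandra's theory, resting concretely on the equivalence of \eqref{3.1}–\eqref{3.2} with the functional equation \eqref{3.3}: a normalized bi-$K$-invariant eigenfunction satisfies \eqref{3.3} and is therefore the zonal spherical function \eqref{3.4}, whence the eigenspace of such functions is one-dimensional. A secondary but genuine technical point is the justification of differentiation under the integral in the smoothing step, which is what actually converts the bare continuity hypothesis into the $C^\infty$ regularity implicit in \eqref{1.1}; it is routine but is the place where the regularity is won.
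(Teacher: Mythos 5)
Your proof of the implication ``\eqref{4.1} $\Rightarrow$ joint eigenfunction'' is correct and is essentially the paper's own argument: smoothing by integrating against a test function in $y$, then applying $D$ in the $y$-variable and setting $y=e$; your added detail on differentiating under the integral is the same step the paper compresses into one sentence.

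The converse implication is where you genuinely diverge, and where there is a gap. Your route --- average over $K$ to form $\bar F_x$, reduce to scalars with covectors $\eta\in V^*$, and invoke the rigidity statement that every $K$-invariant joint eigenfunction with eigenvalue system $\Gamma(D)(i\la)$ is a constant multiple of $\phi_\la$ --- would be a legitimate alternative to the paper's proof \emph{if} that rigidity statement were properly established; but your justification of it is circular at exactly the delicate point. The equivalence of \eqref{3.1}--\eqref{3.2} with \eqref{3.3} presupposes the normalization $\phi(e)=1$, so it only yields that a $K$-invariant joint eigenfunction which is \emph{nonzero} at $\circ$ is a multiple of $\phi_\la$; it says nothing about one vanishing at $\circ$. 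The sub-statement you yourself flag as needed --- that no nonzero $K$-invariant joint eigenfunction vanishes at $\circ$ --- is therefore not obtained (your ``whence the eigenspace of such functions is one-dimensional'' is an unproved leap), and it cannot be sidestepped, since $\bar F_x(\circ)=\Phi(x\cdot\circ)$ will in general vanish for some $x$. Establishing it requires a unique-continuation input beyond the functional equation, and that is precisely what the paper's proof supplies: it forms the bi-$K$-invariant function $f=\phi_\la(e)\Psi_x-\Psi_x(e)\phi_\la$, which vanishes at $\circ$, shows $(D\tilde f)(e)=0$ for \emph{every} left-invariant $D$ on $G$ by means of the averaging map \eqref{4.4}, the map $\mu$ of \eqref{4.5}, and the identity \eqref{4.6}, and then concludes $f\equiv 0$ because $f$ is analytic (elliptic regularity, the Laplace--Beltrami operator being among the operators in \eqref{4.2}). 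To repair your argument, either reproduce this analyticity argument --- it is exactly a proof, in vector-valued form, of the rigidity you invoke --- or cite the one-dimensionality of the space of $K$-invariant joint eigenfunctions as a known result (it is the $K$-invariant case of the scalar version of the proposition, which the paper attributes to \cite{H62}, Ch.~X, \S 7), rather than deriving it from \eqref{3.3}.
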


\begin{proof}

First assume \eqref{4.1}. Integrating \eqref{4.1} against a test function $ f  $ in $ y, \ \Phi(x) $ can be written in terms of derivative of $ f $ in $ x $ so $ \Phi $ is smooth. Then applying $ D_y $ to \eqref{4.1} and putting $ y = e $ we find
\begin{equation}
\label{4.2}
D\Phi = \Gamma (D) (i \la) \Phi
\end{equation}
so $ \Phi  $ is an eigenfunction. For the converse assume \eqref{4.2} and put 
\[ \Psi_x (y) = \int_K \Phi (xky) \, dk \]
and note that 
\begin{equation}
\label{4.3}
D \Psi_x = \Gamma (D) (i \la) \Psi_x. 
\end{equation}
Let $ \mathbf{D} (G) $ denote the algebra of left invariant differential operators on $ G $ and as before $ \mathbf{D}_K (G) $ the subalgebra of those which are also right $ K $-invariant. For $ D \in \mathbf{D}(G) $ let 
\begin{equation}
\label{4.4}
D_0 = \int_K \Ad (k) D \, dk
\end{equation}
and recall that $ D \rightarrow D_0 $ maps $ \mathbf{D}(G) $ onto $ \mathbf{D}_K(G). $ Also consider the map $ \mu $ given by 
\begin{equation}
\label{4.5}
(\mu (u)f)^\sim = u \tilde{f} \qquad u \in \mathbf{D}_K (G), \ \tilde{f} = f \circ \pi
\end{equation}
which maps $ \mathbf{D}_K (G) $ onto $ \mathbf{D} (G/K) $ (\cite{H84}, Ch. II, \S 4).

Among $ D $ in \eqref{4.2} is the Laplace-Beltrami  operator so $ \Phi $ is an analytic function. If $ F \in C^\infty (G) $ is bi-invariant under $ K $ we have by \cite{H84} (3), p. 400,
\begin{equation}
\label{4.6}
(D_0F) (e) = (DF)(e).
\end{equation}
This applies both to $ F(y) = \Psi_x (y \cdot \circ) $ and $ F(y) = \phi _\la (y \cdot \circ)$ and to 
\[ f(y \cdot \circ) = \phi _\la (e) \Psi_x (y \cdot \circ) - \Psi_x (e) \phi _\la (y \cdot \circ). \]
For $ D \in \DD (G) $ arbitrary we take $ u = D_0 $ in \eqref{4.5}. Then using \eqref{4.6}, $ (D \tilde{f})(e) = 0. $ Since $ f $ is analytic and $ f(\circ) = 0 $ we have $ f \equiv 0 $ which is formula \eqref{5.1}.
\end{proof}

With $ V $ as before we consider joint eigenfunctions for the algebra $ \DD (\Xi) = \DD (G/MN). $ If $ \DD (A) $ denotes the left invariant differential operators on $ A $ each $ U \in \DD(A)  $ induces an $ D_U \in \DD (\Xi) $ by
\[ (D_U \phi) (kaMN) = U_a (\phi (kaMN)) \]
and $ U \rightarrow D_U $ is an isomorphism of $ \DD (A) $ onto $ D(\Xi) $ (\cite{H70}, I, \S 2). We denote by $ \hat{\Gamma} $ its inverse. Let $ \mathcal{E}_\la (\Xi) $ denote the joint eigenspace
\[ \mathcal{E}_\la (\Xi) = \left\{ \Psi \in \mathcal{E} (\Xi): D \Psi = \hat{\Gamma} (D) (i \la - \rho)  \Psi  \right\}. \]

\begin{proposition}
\label{prop5.2}
The joint eigenfunctions $ \Psi $ of $ \Xi $ into $ V $ are the smooth functions satisfying
\begin{equation}\label{5.2}
\Psi (gaMN) = \Psi (gMN)e^{(i\la- \rho) (\log a)}, \quad a \in A,
\end{equation}
for some $ \la \in \fa^*_c. $
\end{proposition}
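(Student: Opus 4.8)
The plan is to introduce horocycle coordinates on $\Xi = G/MN$ and thereby turn the eigenvalue equations into a first-order linear system on the abelian group $A$. First I would note that, by the Iwasawa decomposition $G = KAN$ and the inclusion $N \subset MN$, every point of $\Xi$ has the form $kaMN$ with $k \in K$, $a \in A$; indeed if $g = ka_0 n_0$ then $gMN = ka_0 MN$. The assignment $(kM, a) \mapsto kaMN$ is moreover a bijection of $B \times A = (K/M) \times A$ onto $\Xi$: from $k_1 a_1 MN = k_2 a_2 MN$ one gets $k_2^{-1}k_1 a_1 a_2^{-1} \in MN$ (using $a_2 MN = MN a_2$), and then uniqueness in $G = KAN$, i.e.\ $K \cap AN = \{e\}$, forces $a_1 = a_2$ and $k_1 M = k_2 M$. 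Hence a function on $\Xi$ is the same as a function $\Psi(kaMN)$ of $(kM, a)$, and for each fixed $kM$ I may regard $f_k(a) := \Psi(kaMN)$ as a $V$-valued function on $A$.

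Next I would use the stated isomorphism $U \mapsto D_U$ of $\DD(A)$ onto $\DD(\Xi)$ with inverse $\hat{\Gamma}$. Under the identification $\DD(A) \cong S(\fa)$, any algebra homomorphism $\DD(\Xi) \to \CC$ is evaluation at some $\nu \in \fa^*_\CC$; writing $\nu = i\la - \rho$ shows that a joint eigenfunction of $\DD(\Xi)$ lies in $\mathcal{E}_\la(\Xi)$ for a suitable $\la$, which settles the ``for some $\la$'' in the statement. For $D = D_U$ the defining relation $(D_U \Psi)(kaMN) = U_a(\Psi(kaMN))$ together with $D_U \Psi = \hat{\Gamma}(D_U)(\nu)\Psi = U(\nu)\Psi$ translates, in horocycle coordinates, into $U_a f_k = U(\nu) f_k$, where $U(\nu)$ is characterized by $U_a e^{\nu(\log a)} = U(\nu) e^{\nu(\log a)}$.

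Since $S(\fa)$ is generated by $\fa$, it suffices to impose these equations for the degree-one operators $H \in \fa$, namely $H f_k = \nu(H) f_k$. Setting $g_k(a) := e^{-\nu(\log a)} f_k(a)$ and applying the Leibniz rule with $H_a e^{\nu(\log a)} = \nu(H) e^{\nu(\log a)}$ gives $H g_k = 0$ for every $H \in \fa$. As $A = \exp \fa$ is connected, $g_k$ is constant, so $f_k(a) = f_k(e) e^{\nu(\log a)}$, that is $\Psi(kaMN) = \Psi(kMN) e^{(i\la - \rho)(\log a)}$ for all $k \in K$, $a \in A$.

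Finally I would extend this from $K$-representatives to arbitrary $g \in G$ and record the converse. Writing $g = ka_0 n_0$ and pushing $n_0 a = a(a^{-1} n_0 a)$ with $a^{-1} n_0 a \in N$ yields $gaMN = k(a_0 a)MN$, so the previous step gives $\Psi(gaMN) = \Psi(kMN) e^{(i\la - \rho)(\log a_0 + \log a)} = \Psi(gMN) e^{(i\la - \rho)(\log a)}$, which is \eqref{5.2}. The converse is the same computation read backwards: if $\Psi$ satisfies \eqref{5.2} then $f_k(a) = \Psi(kMN) e^{\nu(\log a)}$, whence $U_a f_k = U(\nu) f_k$ and $\Psi \in \mathcal{E}_\la(\Xi)$. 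The one place needing care — the main obstacle — is exactly this passage to general $g$: it relies on $A$ normalizing $N$ (to move $n_0$ past $a$) and on $A$ being abelian (so that logarithms add), while the solvability step relies on $A$ being connected, which pins $f_k$ down up to the single constant $\Psi(kMN)$.
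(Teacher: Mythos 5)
Your proof is correct. The paper itself offers no argument for this proposition — it simply defers to \cite{H08}, Ch.\ II, \S 2 — and your proof is essentially the one found there: identify $\Xi$ with $(K/M)\times A$, transport the joint eigenvalue equations to the $A$-variable via the isomorphism $U \mapsto D_U$, and solve the resulting first-order system on the connected abelian group $A$ to force exponential dependence, with the normalization $aNa^{-1}=N$ handling the passage from $kaMN$ to general $gaMN$.
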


For proof see \cite{H08}, II, \S 2, also for distributions on $ \Xi. $

\section{Spherical functions of a given $ K $-type}
Let $ \Phi $ be a spherical function of type $ \delta $ as in \S 1. Thus we take $ V = \text{Hom} \, (V_\delta, V_\delta) $ in \S5 and assume \eqref{1.2}. As mentioned in \S 5 there exists a  $ \la \in \mathfrak{a}^*_c $ such that
\begin{equation}
\label{5.1}
D \Phi = \Gamma (D) (i \la) \Phi, \quad D \in \mathbf{D}(X).
\end{equation}

This $ \la $ is unique up to conjugacy by $ W $ and since each $ \la \in \fa^*_\mathbf{c} $ is $ W $ conjugate to one which is simple we can take $ \la \in \fa^*_\mathbf{c} $ to be simple. By definition, $ \mathcal{P}_\la $ is injective.

Let $ v_1, \ldots , v_{d(\delta)} $ be an orthonormal basis of $ V_\delta $ such that $ v_1, \ldots, v_{\ell(\delta)} $ span $ V^M_\delta. $ Then
\begin{equation}
\label{5.2}
\Phi (x) v_j = \sum_{i = 1}^{d(\delta)} \phi _{ij} (x) v_i
\end{equation}
Then condition \eqref{1.2} implies 
\[ \phi _{rj} (k \cdot x) = \sum_{i} \delta_{ri} (k) \phi _{ij} (x), \]
where $ \delta_{ri} (k) $ is the expression of $ \delta(k) $ in the basis $ (v_i).  $  Let $ \mathcal{E}_\la (X) $ denote the space of joint eigenfunctions of $ \mathbf{D} (X) $ with eigenvalues $ \Gamma (D)(i \la). $ Let $ \overset{\vee}{\delta} $ denote the contragredient to $ \delta, \ d(\overset{\vee}{\delta}) = d (\delta) $ its dimension and $ \chi_{\overset{\vee}{\delta}} = \bar{\chi}_\delta $ its character.  Let $ \pi $ denote the representation of $ K $ on $ \mathcal{E}_\la (X) $ given by $ \pi (k) : f(x) \rightarrow f (k^{-1} \cdot x). $ By \cite{H84}, IV, \S 1, the map
\[ d(\overset{\vee}{\delta}) \pi (\bar{\chi}_{\overset{\vee}{\delta}}) \]
is the projection of $ \mathcal{E}_\la (X) $ onto the space $ \mathcal{E}_{\la, \overset{\vee}{\delta}} (X) $ of $ K $-finite elements in $ \mathcal{E}_\la (X)  $ of type $ \overset{\vee}{\delta}. $

\begin{lemma}
Each function $ \phi_{ij} $ in \eqref{6.2} belongs to $ \mathcal{E}_{\la, \overset{\vee}{\delta}} (X) $.
\end{lemma}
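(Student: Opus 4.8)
The plan is to reduce the two matrix-valued properties of $\Phi$ — the eigenfunction equation \eqref{5.1} and the $K$-covariance \eqref{1.2} — to scalar statements about the entries $\phi_{ij}$, and then to read off from these both the membership in $\mathcal{E}_\la(X)$ and the $K$-type $\overset{\vee}{\delta}$. Since $\Phi$ is $C^\infty$ with values in the finite-dimensional space $\mathrm{Hom}(V_\delta,V_\delta)$, each $\phi_{ij}$ is automatically a $C^\infty$ scalar function, so no regularity question intervenes and the whole lemma is essentially a bookkeeping computation in the fixed basis $(v_i)$.

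First I would establish that every $\phi_{ij}$ lies in $\mathcal{E}_\la(X)$. Applying $D\in\DD(X)$ to the expansion $\Phi(x)v_j=\sum_i\phi_{ij}(x)v_i$ and using that $D$ acts componentwise on the fixed basis, the eigenfunction equation \eqref{5.1} becomes $\sum_i (D\phi_{ij})(x)\,v_i=\Gamma(D)(i\la)\sum_i \phi_{ij}(x)\,v_i$. Linear independence of the $v_i$ then forces $D\phi_{ij}=\Gamma(D)(i\la)\phi_{ij}$ for all $i,j$ and all $D$, which is precisely membership in $\mathcal{E}_\la(X)$.

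Next I would pin down the $K$-type. From \eqref{1.2} applied to $v_j$ one gets the transformation law $\phi_{rj}(k\cdot x)=\sum_i \delta_{ri}(k)\phi_{ij}(x)$, so with the action $\pi(k)\colon f\mapsto f(k^{-1}\cdot)$ one computes $\pi(k)\phi_{rj}=\sum_i \delta_{ri}(k^{-1})\phi_{ij}$. Fixing $j$, this says exactly that the linear map $v_r^{*}\mapsto\phi_{rj}$ from $V_\delta^{*}$ into $\mathcal{E}_\la(X)$ intertwines the contragredient action $\overset{\vee}{\delta}$ with $\pi$; since $\overset{\vee}{\delta}$ is irreducible, its image lies in the $\overset{\vee}{\delta}$-isotypic subspace, i.e.\ each $\phi_{rj}$ lies in $\mathcal{E}_{\la,\overset{\vee}{\delta}}(X)$. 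Alternatively one may verify directly that the projection $d(\overset{\vee}{\delta})\,\pi(\bar{\chi}_{\overset{\vee}{\delta}})$ fixes each $\phi_{ij}$ by substituting the transformation law and invoking Schur orthogonality for the matrix coefficients $\delta_{ri}$.

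The computations are routine; the one point demanding care is the direction of the representation. Because $\pi$ is defined by $f\mapsto f(k^{-1}\cdot)$ while the covariance \eqref{1.2} multiplies by $\delta(k)$ on the left, the matrix realized by $\pi(k)$ on the span of $\{\phi_{rj}\}_r$ is $\delta(k^{-1})^{T}=\overset{\vee}{\delta}(k)$ rather than $\delta(k)$. Getting this inverse–transpose correct is exactly what produces the contragredient type $\overset{\vee}{\delta}$ in the statement, and it is the single place where an index or inverse slip would give the wrong answer.
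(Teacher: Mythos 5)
Your proposal is correct, and it in fact contains the paper's own proof as your ``alternative'' route: the paper applies the projection $d(\overset{\vee}{\delta})\,\pi(\bar{\chi}_{\overset{\vee}{\delta}})$ to $\phi_{rj}$, substitutes the covariance law $\phi_{rj}(k\cdot x)=\sum_i\delta_{ri}(k)\,\phi_{ij}(x)$, and reduces the resulting integral to $\phi_{rj}(x)$ by Schur orthogonality. Your primary argument is a genuinely different, more structural route: you exhibit $v_r^{*}\mapsto\phi_{rj}$ as a $K$-intertwining map from $(\overset{\vee}{\delta},V_\delta^{*})$ into $(\pi,\mathcal{E}_\la(X))$, so irreducibility of $\overset{\vee}{\delta}$ places the image inside the $\overset{\vee}{\delta}$-isotypic subspace with no integral computation at all; the inverse--transpose bookkeeping you emphasize is exactly where the contragredient type comes from, and it is also the point the paper glosses over (its displayed computation writes $\delta_{ri}(k)$ where the covariance law at $k^{-1}\cdot x$ produces $\delta_{ri}(k^{-1})$; the two lines agree only after the substitution $k\mapsto k^{-1}$ and unitarity of $\delta$). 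You also make explicit something the paper leaves tacit: the projection formula quoted from \cite{H84} applies only to elements already known to lie in $\mathcal{E}_\la(X)$, and your first step supplies that membership by reading off $D\phi_{ij}=\Gamma(D)(i\la)\,\phi_{ij}$ from the linear independence of the $v_i$. In short, your intertwining argument isolates the representation-theoretic mechanism and avoids the orthogonality computation, while the paper's computation is self-contained once the projection operator is quoted and yields the fixed-point property in the exact form used later in the paper; both are complete and correct.
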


\begin{proof}
We have 
\[ \begin{aligned}
d (\overset{\vee}{\delta}) \pi (\bar{\chi}_{\overset{\vee}{\delta}} ) (\phi_{rj}) (x) & = d(\overset{\vee}{\delta}) \int_K \overline{\chi_{\overset{\vee}{\delta}} (k)} \phi_{rj} (k^{-1} \cdot x) dk \\
& = d(\delta) \int_K \overline{\chi_\delta (k)} \sum_{i} \delta_{ri} (k) \phi_{ij} (x) dk \\
\end{aligned} \]
which by Schur's orthogonality relations reduces to $ \phi_{rj} (x).  $ Thus $ \phi_{ij} \in \mathcal{E}_{\la, \overset{\vee}{\delta}} (X) $ for all $ i, j. $
\end{proof}

By \cite{H70}, IV, \S 1 and \cite{H76} \S 7, invoking the Paley-Wiener theorem for \eqref{a4.2}, each $ K $-finite joint eigenfunction of $ \DD (X) $ is the Poisson transform of a $ K $-finite member of $ \mathcal{E} (B). $ We apply this to $ \phi_{ij} (x). $ As is well known the functions 
\[ \langle \delta(k) v_j, v_i \rangle \quad 1 \leq i \leq d(\delta), \ 1 \leq j \leq \ell(\delta) \]
form a basis of $ \mathcal{E}_{\overset{\vee}{\delta}} (B), $ the space of $ K $-finite functions in $ \mathcal{E}(B) $  of type $ \overset{\vee}{\delta}$. The corresponding images under $ \mathcal{P}_\la $
\begin{equation}
\label{5.3}
 \int_{K/M} e^{(i\la + \rho) (A(x, kM))} \langle \delta (k) v_j, v_i \rangle \, dk_M 
 \end{equation}
will by \cite{H73, H76} span the space $ \mathcal{E}_{\la, \overset{\vee}{\delta}} (X)$. Consider the Eisenstein integral
\begin{equation}
\label{5.4}
\Phi_{\la, \delta} (x) = \int_K e^{(i \la + \rho) (A(x, kM))} \delta(k) \, dk, 
\end{equation}
whose matrix entries are given by \eqref{5.3}.

Changing from $ \la  $ to $ s \la (s \in W) $ only changes $ \Phi_{\la, \delta} $ by a factor independent of $ x $ \cite{H73}. This proves following result. 

\begin{theorem}
\label{th5.1}
Each spherical function of type $ \delta $ has each of its matrix entries linear combinations of the functions 
\[ \langle \Phi_{\la, \delta} (x) v_j, v_i \rangle \quad 1 \leq j \leq \ell(\delta), 1 \leq i \leq d(\delta). \]
\end{theorem}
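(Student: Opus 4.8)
The plan is to assemble the ingredients already set up in this section: the Lemma, which locates each matrix entry of $\Phi$ inside the eigenspace $\mathcal{E}_{\la,\overset{\vee}{\delta}}(X)$, together with the Poisson-transform description of that eigenspace, which identifies a natural spanning set of it with the matrix entries of the Eisenstein integral $\Phi_{\la,\delta}$.

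First I would fix a spherical function $\Phi$ of type $\delta$ and take $\la \in \fa^*_c$ to be the simple parameter supplied by \eqref{5.1}, so that $\mathcal{P}_\la$ is injective. Writing the matrix entries $\phi_{ij}$ via \eqref{5.2}, the Lemma shows that each $\phi_{ij}$ lies in $\mathcal{E}_{\la,\overset{\vee}{\delta}}(X)$, the space of $K$-finite joint eigenfunctions of $\DD(X)$ of type $\overset{\vee}{\delta}$ with eigenvalue $\Gamma(D)(i\la)$. This reduces the theorem to exhibiting an explicit spanning set of $\mathcal{E}_{\la,\overset{\vee}{\delta}}(X)$ consisting of the entries $\langle \Phi_{\la,\delta}(x) v_j, v_i\rangle$.

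Second I would invoke the cited Poisson-transform result from \cite{H70, H76}: every $K$-finite joint eigenfunction of $\DD(X)$ is $\mathcal{P}_\la$ applied to some $K$-finite function on $B = K/M$. Since functions on $B$ are right $M$-invariant, the matrix coefficient $k \mapsto \langle \delta(k) v_j, v_i\rangle$ descends to $B$ precisely when $\delta(m) v_j = v_j$ for all $m \in M$, that is when $v_j \in V^M_\delta$, which is the range $1 \le j \le \ell(\delta)$; for these indices the functions $\langle \delta(k) v_j, v_i\rangle$ form the stated basis of $\mathcal{E}_{\overset{\vee}{\delta}}(B)$. Applying $\mathcal{P}_\la$ term by term and using \eqref{5.3}, their images span $\mathcal{E}_{\la,\overset{\vee}{\delta}}(X)$.

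Finally I would match these images with the Eisenstein entries. Comparing the Poisson integral \eqref{5.3} with the definition \eqref{5.4} shows that $\mathcal{P}_\la$ applied to $\langle \delta(k) v_j, v_i\rangle$ is exactly $\langle \Phi_{\la,\delta}(x) v_j, v_i\rangle$. Hence each $\phi_{ij}$, belonging to a space spanned by these entries, is a linear combination of the $\langle \Phi_{\la,\delta}(x) v_j, v_i\rangle$ with $1 \le j \le \ell(\delta)$ and $1 \le i \le d(\delta)$, which is the assertion. The only point needing care is the injectivity of $\mathcal{P}_\la$: simplicity of $\la$ makes the Poisson transform a bijection onto the eigenspace, so that the functions in \eqref{5.3} genuinely span $\mathcal{E}_{\la,\overset{\vee}{\delta}}(X)$ rather than merely lying inside it. This, together with the $M$-invariance that trims $j$ to the range $1 \le j \le \ell(\delta)$, is the only real subtlety; the remainder is bookkeeping with the structure theorems cited above.
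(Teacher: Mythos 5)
Your proposal is correct and follows essentially the same route as the paper: the Lemma placing the entries $\phi_{ij}$ in $\mathcal{E}_{\la, \overset{\vee}{\delta}}(X)$, the Poisson-transform spanning result cited from \cite{H70,H76}, and the identification of the images \eqref{5.3} with the matrix entries of the Eisenstein integral \eqref{5.4}. The only point the paper adds beyond your argument is the remark that replacing $\la$ by $s\la$ ($s \in W$) changes $\Phi_{\la,\delta}$ only by a factor independent of $x$, which is what licenses passing from the eigenvalue parameter given by \eqref{5.1} to a simple representative in its $W$-orbit.
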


These functions all turn out to be suitable derivative of the zonal spherical functions \cite{H76}.

\begin{theorem}
\label{th5.2}
Fix $ v, w \in V_\delta $ and assume $ -\la $ simple. Then there exists a right invariant differential operator $ D $ on $ G $ such that
\[ \langle v, \Phi_{\la, \delta} (gK), w \rangle = (D\phi _\la) (g). \]
\end{theorem}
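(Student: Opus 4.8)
The plan is to realize the matrix entry $\langle v,\Phi_{\la,\delta}(gK),w\rangle$ as a left-differentiation of $\phi_\la$, exploiting that a right-invariant differential operator $D$ on $G$ acts by differentiating the argument on the left, $(D\phi_\la)(g)=\int_K D_g\,e^{(i\la-\rho)(H(gk))}\,dk$, and that both $\phi_\la$ and the Eisenstein integral are built from the single kernel family $e^{(i\la+\rho)(A(k^{-1}g))}$. First I would record the bookkeeping identity $A(k^{-1}g)=-H(g^{-1}k)$ from Section 2, which gives
\[
e^{(i\la+\rho)(A(k^{-1}g))}=e^{(i(-\la)-\rho)(H(g^{-1}k))},
\]
so that $\langle v,\Phi_{\la,\delta}(gK),w\rangle=\int_K e^{(i(-\la)-\rho)(H(g^{-1}k))}\langle\delta(k)w,v\rangle\,dk$, while $\Phi_{\la,\mathrm{triv}}(gK)=\phi_{-\la}(g^{-1})$. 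This is exactly the dictionary $g\rightarrow g^{-1}$, $\la\rightarrow-\la$ that explains why the hypothesis is stated for $-\la$, and it reduces the problem to producing the $\delta$-matrix coefficient $\langle\delta(k)w,v\rangle$ inside the spherical integrand by differentiation.

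Next I would interpret the statement representation-theoretically. For fixed parameter the left translates of the kernels $g\mapsto e^{(i\la+\rho)(A(k^{-1}g))}$ span the spherical principal series realized inside $\mathcal{E}_{-\la}(X)$; its $K$-fixed vector is $\phi_{-\la}(\cdot^{-1})$ and its $\overset{\vee}{\delta}$-isotypic component is precisely the span of the entries $\langle\Phi_{\la,\delta}w,v\rangle$, which is finite-dimensional by the lemma and the discussion preceding Theorem \ref{th5.1}. Right-invariant operators $D=u^*$, $u\in U(\fg)$, preserve this module and carry the spherical vector into the various $K$-types. I would therefore reduce Theorem \ref{th5.2} to the cyclicity assertion: the $K$-fixed vector generates the whole module under left $U(\fg)$-differentiation, so that in particular each $\langle v,\Phi_{\la,\delta}(gK),w\rangle$ lies in $U(\fg)\cdot\phi_\la$ (after the $g\rightarrow g^{-1}$, $\la\rightarrow-\la$ translation above). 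Concretely, differentiation in the $\fk$-directions on the left inserts the infinitesimal $K$-action, and iterating through a suitable $u$ reproduces the matrix coefficient $\langle\delta(k)w,v\rangle$; selecting that $u$ and setting $D=u^*$ yields the desired operator for the prescribed $v,w$.

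The main obstacle is the cyclicity input: one must guarantee that left-differentiation of the spherical vector actually reaches the entire $\overset{\vee}{\delta}$-component rather than landing in a proper submodule that omits that $K$-type. This is precisely where ``$-\la$ simple'' is used. By Theorem \ref{th3.1}, simplicity of $-\la$ is equivalent to $\Gamma^+_X(-\la)^{-1}\neq 0$, i.e.\ to injectivity of $\mathcal{P}_{-\la}$, and injectivity of the Poisson transform forces the principal series module to be generated by its spherical vector, so no $K$-type is unreachable. Making this reduction rigorous—and matching it with the Paley--Wiener/Poisson-transform description of $K$-finite joint eigenfunctions invoked before Theorem \ref{th5.1}, which already presents every such function as a Poisson transform of a $K$-finite member of $\mathcal{E}(B)$—is the substantive part; the remaining passage from ``$\langle v,\Phi_{\la,\delta}w\rangle\in U(\fg)\cdot\phi_\la$'' to the explicit right-invariant $D$ is then formal.
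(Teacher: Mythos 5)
The paper itself contains no proof of Theorem \ref{th5.2}; it is quoted from \cite{H76}, and the route taken there is essentially the one you outline: write $\phi_\la=\mathcal{P}_\la\mathbf{1}$ and the entries of $\Phi_{\la,\delta}$ as $\mathcal{P}_\la$ of the matrix coefficients $\langle\delta(\cdot)w,v\rangle$ on $B$, observe that a right invariant operator on $G$ differentiates the left variable and hence corresponds under $\mathcal{P}_\la$ to the derived principal-series action $d\pi_\la(U(\fg))$ applied to $\mathbf{1}$, and reduce the theorem to cyclicity of the spherical vector $\mathbf{1}$ in the $K$-finite principal-series module with parameter $\la$ --- which is equivalent to injectivity of $\mathcal{P}_{-\la}$, i.e.\ to $-\la$ being simple. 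Your placement of the hypothesis on $-\la$ rather than $\la$ is exactly right, and the overall skeleton is sound.

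Two genuine defects remain, however. First, your one ``concrete'' mechanism is false: differentiation of $\phi_\la$ in the $\fk$-directions on the left cannot ``insert the infinitesimal $K$-action,'' because $\phi_\la$ is left $K$-invariant (equivalently $d\pi_\la(X)\mathbf{1}=0$ for $X\in\fk$, since $\mathbf{1}$ is the $K$-fixed vector), so every left $\fk$-derivative of $\phi_\la$ vanishes identically. The coefficients $\langle\delta(k)w,v\rangle$ are reached only through the $\fp$-directions, and precisely which $K$-types can be reached this way is the nontrivial content of the theorem; there is no shortcut of the kind you describe. Second, the pivotal implication --- injectivity of $\mathcal{P}_{-\la}$ forces $\mathbf{1}$ to be $U(\fg)$-cyclic --- is asserted but never proved, and you yourself flag it as ``the substantive part''; since the whole theorem lives in that step, what you have is a reduction, not a proof. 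The missing argument is short and should be included: use the $G$-invariant bilinear pairing $\langle F_1,F_2\rangle=\int_B F_1F_2\,db$ between the compact-picture principal series with parameters $\la$ and $-\la$. If a $K$-finite $F$ is orthogonal to $d\pi_\la(U(\fg))\mathbf{1}$, then all derivatives of the function $g\mapsto\langle\pi_\la(g)\mathbf{1},F\rangle=(\mathcal{P}_{-\la}F)(gK)$ vanish at $g=e$; this function is a joint eigenfunction of $\DD(X)$, hence real-analytic, so it vanishes identically, and simplicity of $-\la$ gives $F=0$. Nondegeneracy of the pairing on each $K$-isotypic component then shows $d\pi_\la(U(\fg))\mathbf{1}$ exhausts the $K$-finite vectors, in particular contains $\langle\delta(\cdot)w,v\rangle$, which is what the theorem needs. (Minor slip: the kernels $gK\mapsto e^{(i\la+\rho)(A(k^{-1}g))}$ lie in $\mathcal{E}_\la(X)$, not $\mathcal{E}_{-\la}(X)$, by the paper's \S 4.)
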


This in return implies a series expansion of $ \Phi_{\la, \delta} $ on $ \exp \fa^+,  $ generalizing Harish-Chandra's expansion \eqref{3.5} which introduced the \textbf{c} function.

\begin{theorem}
\label{th5.3}
There exist meromorphic functions $ \mathbf{C}_\sigma (\sigma \in W) $ and rational functions $ \boldsymbol{\Gamma}_\mu (\mu \in \La) $ all with values in $ \mathrm{Hom} \, (V_\delta^M, V_\delta^M) $ such that for $ H \in \fa^+, v \in V^M_\delta, $
\[ \Phi_{\la, \delta} (\exp H \cdot \circ)v = \sum_{\sigma \in W, \mu \in \La} e^{(i \sigma \la - \rho - \mu)(H)} \boldsymbol{\Gamma}_\mu (\sigma \la) \mathbf{C}_\sigma (\la) v\]
\end{theorem}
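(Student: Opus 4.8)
The plan is to reduce the statement to a Harish-Chandra type recursion on $\exp\fa^+$, acting on the space $V^M_\delta$, and to assemble the global Eisenstein integral as a $W$-combination of the resulting series. First I would record two structural facts. Substituting $k\mapsto mk$ in \eqref{5.4} for $m\in M$, and using that $m$ centralizes $A$ while $g\mapsto A(g)$ is right $K$-invariant, gives $\delta(m)\Phi_{\la,\delta}(\exp H\cdot\circ)=\Phi_{\la,\delta}(\exp H\cdot\circ)$; hence the image of $\Phi_{\la,\delta}(\exp H\cdot\circ)$ lies in $V^M_\delta$, which is why the coefficients should be valued in $\mathrm{Hom}(V^M_\delta,V^M_\delta)$. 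Second, the operators defining the eigenvalue equations \eqref{5.1} are induced from the left invariant algebra $\DD_K(G)\subset\DD(G)$, whereas the operator $D$ of Theorem \ref{th5.2} is right invariant; since left and right invariant operators commute, each matrix entry $(D\phi_\la)(g)=\langle v,\Phi_{\la,\delta}(gK),w\rangle$ is again a joint eigenfunction of $\DD(X)$ with the same eigenvalues $\Gamma(\cdot)(i\la)$, consistently with \eqref{5.1}.

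Next I would analyse $\Phi_{\la,\delta}$ on $\exp\fa^+$ through the radial parts of $\DD(X)$. Restricting \eqref{5.1} to $\exp\fa^+$ and acting on $V^M_\delta$ produces a holonomic system of differential equations for $H\mapsto\Phi_{\la,\delta}(\exp H\cdot\circ)\big|_{V^M_\delta}$ with regular singularities along the walls $\al(H)=0$ and at infinity. Expanding the coefficients, which involve $\coth\al(H)=1+2\sum_{k\ge1}e^{-2k\al(H)}$, in the $\La$-grading yields for each $\sigma\in W$ a formal solution
\[ \Phi^\delta_{\sigma\la}(\exp H)=e^{(i\sigma\la-\rho)(H)}\sum_{\mu\in\La}\boldsymbol{\Gamma}_\mu(\sigma\la)\,e^{-\mu(H)},\qquad \boldsymbol{\Gamma}_0=\mathrm{id}_{V^M_\delta}. \]
Substitution gives a recursion determining $\boldsymbol{\Gamma}_\mu(\sigma\la)$ from the $\boldsymbol{\Gamma}_\nu$ with $\nu<\mu$ by inverting an indicial endomorphism $\langle\mu,\mu-2i\sigma\la+2\rho\rangle\,\mathrm{id}+T_\delta$, where $T_\delta\in\mathrm{End}(V^M_\delta)$ is the fixed zero-order term carrying the Casimir of $\fk$ through $\delta$. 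For generic $\la$ this endomorphism is invertible, so the $\boldsymbol{\Gamma}_\mu$ are well defined and rational in $\la$ with values in $\mathrm{Hom}(V^M_\delta,V^M_\delta)$.

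I would then establish convergence on $\fa^+$ by the usual Harish-Chandra/Gangolli majorization, comparing the matrix recursion term-by-term with the scalar one governing \eqref{3.5}. Since $\Phi_{\la,\delta}(\exp H\cdot\circ)\big|_{V^M_\delta}$ is a genuine smooth solution and, for generic $\la$, the exponentials $e^{(i\sigma\la-\rho)(H)}$ ($\sigma\in W$) are distinct, a dimension count forces
\[ \Phi_{\la,\delta}(\exp H\cdot\circ)v=\sum_{\sigma\in W}\Phi^\delta_{\sigma\la}(\exp H)\,\mathbf{C}_\sigma(\la)\,v,\qquad v\in V^M_\delta, \]
with constant connection matrices $\mathbf{C}_\sigma(\la)\in\mathrm{Hom}(V^M_\delta,V^M_\delta)$; their meromorphy follows from the meromorphic dependence on $\la$ of both the series and the integral \eqref{5.4}, and in the scalar case $\mathbf{C}_\sigma(\la)$ collapses to $\mathbf{c}(\sigma\la)$, recovering \eqref{3.5}. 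Alternatively one may bypass the radial computation by invoking Theorem \ref{th5.2}: writing each matrix entry as $(D\phi_\la)(g)$ and applying the right invariant $D$ to \eqref{3.5} term-by-term, one checks that $D$ sends each summand $e^{(is\la-\rho-\mu)(H)}$ to a finite combination of such summands with the $\La$-exponents only lowered, giving the same form.

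The hard part will not be the bookkeeping but the invertibility of the indicial endomorphism $\langle\mu,\mu-2i\sigma\la+2\rho\rangle\,\mathrm{id}+T_\delta$ uniformly in $\mu\in\La$: one must show its determinant is a nonzero rational function of $\la$, so that the $\boldsymbol{\Gamma}_\mu$ are globally defined rational functions and the series converges. This is the matrix analogue of the nonvanishing of Harish-Chandra's scalar denominators, and it is precisely where the $K$-type $\delta$, entering through $T_\delta$, can obstruct the naive recursion; the finitely many $\la$ at which the leading factor degenerates are exactly what force $\boldsymbol{\Gamma}_\mu$ and $\mathbf{C}_\sigma$ to be only rational, respectively meromorphic, rather than holomorphic.
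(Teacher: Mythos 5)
Your ``alternative'' argument at the end is in fact the paper's own proof: Theorem \ref{th5.3} is presented there as a direct consequence of Theorem \ref{th5.2} (details in \cite{H76}) --- each matrix entry of $\Phi_{\la,\delta}$ is written as $(D\phi_\la)(g)$ with $D$ right invariant, and $D$ is applied term by term to Harish-Chandra's expansion \eqref{3.5}, which can only lower exponents in the $\La$-grading (the hypothesis in Theorem \ref{th5.2} that $-\la$ be simple is then removed by meromorphic continuation in $\la$). Your main route --- restricting the system \eqref{5.1} to $\exp\fa^+$, building formal $V^M_\delta$-valued solutions with leading exponents $i\sigma\la-\rho$ by recursion, proving convergence by the Harish-Chandra/Gangolli majorization, and identifying the genuine solution with a $W$-combination of the formal ones by a dimension count --- is the classical construction for Eisenstein integrals; it is more self-contained, and it is what produces the ``explicit recursion formulas'' for the $\boldsymbol{\Gamma}_\mu$ that the paper alludes to right after the theorem. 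Your preliminary observation that $\Phi_{\la,\delta}(\exp H\cdot\circ)$ maps $V_\delta$ into $V^M_\delta$ is also correct and is exactly why the coefficients can be taken in $\mathrm{Hom}(V^M_\delta,V^M_\delta)$.

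However, the step you single out as ``the hard part'' rests on a misreading of the radial system, and as stated it would fail. The $\delta$-dependent zero-order terms in the radial parts of $\DD(X)$ carry coefficients of the type $\sinh^{-2}\al(H)$; in rank one this is visible in \eqref{6.4} and in the role of $\delta(\omega_\al)$, $\delta(\omega_{2\al})$, which determine the exponents $r,s$ at the singularity $t=0$, not the behavior at infinity. Since $\sinh^{-2}\al(H)=4\sum_{k\geq 1}k\,e^{-2k\al(H)}$ has no constant term in the $\La$-grading, these terms feed exclusively into the right-hand side of the recursion (the coefficients $\boldsymbol{\Gamma}_\nu$ with $\nu<\mu$), never into the factor multiplying $\boldsymbol{\Gamma}_\mu$ itself. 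That factor is the scalar Harish-Chandra denominator $\langle\mu,\mu-2i\sigma\la\rangle$: there is no $\mu$-independent endomorphism $T_\delta$ to invert (and the extra $+2\rho$ in your formula is also spurious). Indeed, if a non-scalar constant $T_\delta$ did occur in the indicial operator, the equation at $\mu=0$ would force the leading coefficient into an eigenspace of $T_\delta$ and would shift the leading exponents away from $i\sigma\la-\rho$, contradicting both the form asserted in Theorem \ref{th5.3} and the rank-one exponents read off from \eqref{6.4}; note also that any term built from the Casimir of $M$ kills $V^M_\delta$ outright. So the obstruction you anticipate does not exist: the singular set in $\la$ is the same locally finite union of hyperplanes $\langle\mu,\mu-2i\sigma\la\rangle=0$ as in the scalar case, the $K$-type $\delta$ enters only the numerators of the $\boldsymbol{\Gamma}_\mu$, and the meromorphy of $\mathbf{C}_\sigma$ has to come from the connection/limit argument (as carried out in rank one to prove \eqref{6.2}) or from the integral formula, not from the nonvanishing of a $T_\delta$-determinant.
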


The $ \boldsymbol{\Gamma}_\mu $ are given by explicit recursion formulas. Note that in contrast to \eqref{3.5} the order of the factors in each term of the series is important. Also by \cite{H70}, \cite{H73}
\[ \Phi_{\sigma \la, \delta} (x) = \Phi_{\la, \delta} (x) \Gamma_{\sigma, \la}, \]
where $ \Gamma_{\sigma, \la} $ is meromorphic on $ \fa^*_\mathbf{c} $ with values in $ \mathrm{Hom} \, (V_\delta, V_\delta) $ and 
\[ \Gamma_{\sigma, \la} v = \frac{C_{\sigma^{-1}} (\sigma \la)}{\mathbf{c} (\la)} v \quad \text{ for } v \in V^M_\delta. \]

We could also consider the analog of this for the dual space $ \Xi = G/MN. $ For this consider a representation $ \sigma $ of $ MN $ on a finite dimension space $ V_\sigma $ and then take $ V = \text{Hom} \, (V_\sigma, V_\sigma) $ in \S 5 and replace \eqref{1.2} by
\[ \Psi (h \cdot \xi) = \sigma (h) \Psi (\xi) \quad h \in MN, \ \xi \in \Xi,\]
extending $ \Psi $ to distributions. 

This leads to \emph{Whittaker functions} and \emph{Whittaker distributions}, studied for example in Goodman-Wallach [80] where relations with conical distributions is also established. 
 
\section{The asymptotics of $ \Phi_{\la, \delta} $}
In this section we limit ourselves to the case rank $ X= 1. $  Then by Kostant \cite{K69} $ \ell (\delta) = \dim V^M_\delta = 1. $ We fix $ v \in V^M_\delta $ of norm 1 and take $ H \in \fa^+ $ such that $ \al(H) = 1. $ We put $ a_t = \exp t H  $ and 
\begin{equation}
\label{6.1}
\phi _{\la, \delta} (x) = \langle \Phi_{\la, \delta} (x) v, v \rangle.
\end{equation}

Then by Theorem \ref{th5.3}
\[ 
\begin{aligned}
\phi _{\la, \delta} (a_t \cdot \circ) & = e^{(i \la - \rho) (tH)} \sum_{n = 0}^{\infty} e^{-nt} \Gamma_n (\la) \langle \mathbf{C}_e (\la) v, v \rangle \\
& + e^{-(i\la + \rho) (tH)} \sum_{n =0}^{\infty} e^{-nt} \Gamma_n (-\la) \langle \mathbf{C}_\sigma (\la) v, v \rangle 
\end{aligned} \]
Multiply by $ e^{-(i\la - \rho) (tH)}, \quad \la = \xi + i \eta. $

Then if $ \eta < 0 $ we have by \cite{H08}, II, Theorem 3.16, 
\[ \lim\limits_{t \rightarrow \infty} e^{-(i\la - \rho) (tH)} \phi _{\la, \delta} (a_t \cdot \circ) = \mathbf{c} (\la) \langle v, v \rangle. \]

The left hand side is 
\[  \sum_{0}^{\infty} e^{-nt} \Gamma_n (\la) \langle \mathbf{C}_e (\la) v, v \rangle  + e^{-2i\la (tH)} \sum_{0}^{\infty} e^{-nt} \Gamma_n (-\la) \langle \mathbf{C}_\sigma (\la) v,v \rangle \]

Since $ \Gamma_n $ grows at most exponentially $ < \frac{1}{2} n $ (see \cite{H08}, III, \S 5) and $ -2i \la = - 2 i \xi + 2 \eta $ the limit for $ t \rightarrow \infty $ equals $ \mathbf{C}_e (\la) . $ Since both sides are meromorphic in $ \la $ we conclude

\begin{equation}
\label{6.2}
\mathbf{C}_e (\la) = \mathbf{c} (\la).
\end{equation}

We can also multiply the expansion by $ e^{(i \la + \rho)(tH)} $ and for suitable $ \eta  $ deduce
\begin{equation}
\label{6.3}
\lim\limits_{t  \rightarrow \infty} e^{(i \la + \rho)(tH)} \phi_{\la, \delta} (a_t \cdot \circ) = \mathbf{C}_\sigma (\la).
\end{equation}

This in itself does not give much information.

On the other hand, $ \phi_{\la, \delta} $ is an eigenfunction of the Laplacian and by \cite{H76} or \cite{H08}, p. 328, given by 
\begin{equation}
\label{6.4}
\begin{aligned}
& \phi_{\la, \delta} (a_t \cdot \circ)  = c_{\la, \delta} \tanh^s t  \cosh^l t \\
& \!\times\! F\!\left(\thalf (s+r-l), \thalf (s-r-l + 1 -m_{2 \al}), s \!+\! \thalf (m_\al+ m_{2 \al} + 1), \tanh^2 t \right)
\end{aligned}
\end{equation}
where $ l = (i \la - \rho) (H), \ m_\al$ and $ m_{2 \al} $ are the multiplicities of $ \al $ and $ 2 \al, \ r $  and $ s $ are integers, $ r \leq s $ given by 
\[ 
\begin{aligned}
r (r + m_{2 \al} -1) & = - \frac{1}{4} d_{2 \al} \\
s(s+m_\al + m_{2 \al} - 1) & = - d_\al - \frac{1}{4} d_{2 \al} \\
\end{aligned} \]

and $  d_\al $ and $ d_{2 \al} $ determined by
\[ 
\begin{aligned}
\delta (\omega_\al) | V^M_\delta & = d_\al (2 (m_\al + 4 m_{2 \al}))^{-1} \\
\delta (\omega_{2 \al}) | V^M_\delta & = d_{2 \al} (2(m_\al + 4 m_{2 \al}))^{-1}. \\
\end{aligned} \]

The operators $ w_\al $ and $ w_{2 \al} $ are defined in \cite{H76}, \S 4.

Also by \cite{H76}
\begin{equation}
\label{6.5}
c_{\la, \delta} = \frac{\Gamma (\half (\langle i \la + \rho, \al_0 \rangle + s + r))}{\Gamma (\half (\langle i \la + \rho, \al_0 \rangle))} \frac{\Gamma (\half (\langle i \la + \rho, \al_0 \rangle + 1 - m_{2 \al} + s - r))}{\Gamma (\half (\langle i \la + \rho, \al_0 \rangle + 1 - m_{2 \al}))}
\end{equation}

Consider the factor $ (\cosh t)^l, \ l = i \xi (H) - (\eta + \rho) (H) $ and choose $  \eta > 0. $ 

Multiply $ \phi_{\la, \delta} (a_t \cdot \circ) $ by $ (2 \cosh t)^{-\ell} $ and let $ t \rightarrow + \infty.  $ Then 
\begin{equation}
\label{6.6}
\lim\limits_{t \rightarrow \infty} (2 \cosh t)^{-\ell} \phi_{\la, \delta} (a_t \cdot \circ) = c_{\la, \delta} \  \tfrac{1}{2^l} \ \frac{\Gamma (c-a-b) \Gamma (c)}{\Gamma (c-a) \Gamma (c-b)}.
\end{equation}
by the limit formula $ F(a,b,c, \infty), $ where 
\[ a = \thalf (s+r - l), \quad b = \thalf (s-r-l + 1 - m_{2 \al}), \quad c = s+ \thalf (m_\al + m_{2 \al} +1). \]
Then
\[ 
\begin{aligned}
c - a - b & = i \la (H) \\
c-a & = \thalf s - \thalf r + \thalf i \la (H) + \tfrac{1}{4} m_\al + \thalf \\
c - b & = \thalf s + \thalf r + \thalf i \la (H) + \tfrac{1}{4} m_\al + \thalf m_{2 \al} \\
\end{aligned} \]

In \eqref{6.5} recall that $ \al_0 = \al / \langle \al, \al \rangle  $
\[ 
\begin{aligned}
\langle \rho, \al_0 \rangle & = \langle \thalf m_\al \al + m_{2 \al} \al, \al_0 \rangle = \thalf m_\al + m_{2 \al} \\
\langle i \la + \rho, \al_0 \rangle & = i \la (H) + \half m_\al + m_{2 \al} \\
\end{aligned} \]

Thus the right hand side of \eqref{6.6} equals
\[ 
\begin{aligned}
\frac{\Gamma \left( \half \left( i \la (H) + \half m_\al + m_{2 \al} + s + r \right) \right) \Gamma \left( \half \left( i \la (H) + \half m_\al + 1 + s - r \right)\right)}{\Gamma \left( \half \left( i \la (H) + \half m_\al + m_{2 \al}\right) \right) \Gamma \left( \half \left( i \la (H) + \half m_\al + 1\right)\right)} \\
\times \frac{1}{2^l} \,  \frac{\Gamma \left( i \la (H)\right) \Gamma \left( s + \half \left( m_\al + m_{2 \al} + 1 \right)\right)}{\Gamma \left( \half s - \half r + \half i \la (H) + \frac{1}{4} m_\al + \half \right) \Gamma \left( \frac{s}{2} + \frac{r}{2} + \frac{i \la (H)}{2} + \frac{m_\al}{4} + \half m_{2 \al} \right)}
\end{aligned} \]
Here top of the first fraction cancels against the bottom of second fraction. Thus \eqref{6.6} becomes (considering \eqref{3.7}), 
\[ 
\begin{aligned}
& \frac{1}{2^l} \, \frac{\Gamma (i \la (H)) \Gamma \left( s + \half (m_\al + m_{2 \al} + 1) \right)}{\Gamma \left( \half \left( i \la (H) + \half m_\al + m_{2 \al}\right)\right) \Gamma \left( \half  \left( i \la (H) + \half m_\al +1 \right)\right)} \\
& = \frac{\Gamma \left( s + \frac{n}{2}\right)}{\Gamma \left( \frac{n}{2}\right)} \, \mathbf{c} (\la) \hspace{16em} \text{ if } n = \dim X.
\end{aligned} \]


\begin{theorem}
\label{th6.1}
For $ \la = \xi + i \eta, \ \eta > 0,  $

\begin{equation}
\label{6.7}
\lim\limits_{t \rightarrow \infty} (2 \cosh t)^{-\ell} \phi_{\la, \delta} (a_t \cdot \circ) = \frac{\Gamma \left( s + \frac{n}{2}\right)}{\Gamma \left( \frac{n}{2}\right)} \, \mathbf{c} (\la).
\end{equation}
\end{theorem}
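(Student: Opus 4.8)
The statement is a distillation of the computation recorded in \eqref{6.4}--\eqref{6.6}, so the plan is to organize that computation into a clean limit argument. I would begin from the closed hypergeometric form \eqref{6.4} of $\phi_{\la,\delta}(a_t\cdot\circ)$, which is exact (not merely asymptotic) and is the input special to rank one. Multiplying by $(2\cosh t)^{-l}$ and using $(2\cosh t)^{-l}(\cosh t)^{l}=2^{-l}$ removes the $\cosh$-power entirely, leaving
\[
(2\cosh t)^{-l}\phi_{\la,\delta}(a_t\cdot\circ)=2^{-l}\,c_{\la,\delta}\,\tanh^{s}t\;F\!\left(a,b,c,\tanh^{2}t\right),
\]
with $a,b,c$ as recorded after \eqref{6.6}. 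Since $\tanh^{s}t\to1$ and $\tanh^{2}t\to1^{-}$ as $t\to\infty$, the whole problem reduces to the endpoint value of the Gauss hypergeometric function.

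The analytic heart of the argument is the endpoint limit $\lim_{x\to1^{-}}F(a,b,c,x)=\Gamma(c)\Gamma(c-a-b)/\bigl(\Gamma(c-a)\Gamma(c-b)\bigr)$. This holds precisely when $\mathrm{Re}(c-a-b)>0$, and since $c-a-b=i\la(H)$ the sign hypothesis on $\eta$ is exactly what places us in this convergence regime; for the opposite sign the singular solution behaving like $(1-x)^{c-a-b}$ dominates and the limit ceases to be the Gauss value. Granting this, I obtain \eqref{6.6}.

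It then remains to identify the right-hand side of \eqref{6.6} with $\tfrac{\Gamma(s+n/2)}{\Gamma(n/2)}\mathbf{c}(\la)$. I would substitute the explicit $a,b,c$ to get $c-a-b=i\la(H)$ together with the displayed $c-a$ and $c-b$, insert the closed form \eqref{6.5} for $c_{\la,\delta}$, and rewrite each argument through the rank-one pairings $\langle\rho,\al_0\rangle=\tfrac12 m_\al+m_{2\al}$ and $\langle i\la+\rho,\al_0\rangle=i\la(H)+\tfrac12 m_\al+m_{2\al}$. The numerator Gammas coming from $c_{\la,\delta}$ then cancel against two of the Gauss denominator Gammas, leaving
\[
\frac{1}{2^{l}}\,\frac{\Gamma\!\bigl(i\la(H)\bigr)\,\Gamma\!\bigl(s+\tfrac12(m_\al+m_{2\al}+1)\bigr)}{\Gamma\!\bigl(\tfrac12(i\la(H)+\tfrac12 m_\al+m_{2\al})\bigr)\,\Gamma\!\bigl(\tfrac12(i\la(H)+\tfrac12 m_\al+1)\bigr)}.
\]

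Finally I would match this against the rank-one specialization of \eqref{3.7}: the factor $2^{-l}=2^{-\langle i\la-\rho,\al_0\rangle}$, the factor $\Gamma(i\la(H))=\Gamma(\langle i\la,\al_0\rangle)$, and the two denominator Gammas agree term by term with $\mathbf{c}(\la)$; the only discrepancy is that the numerator carries $\Gamma(s+\tfrac12(m_\al+m_{2\al}+1))$ in place of $\Gamma(\tfrac12(m_\al+m_{2\al}+1))$. Since in rank one $n=\dim X=1+m_\al+m_{2\al}$, i.e.\ $n/2=\tfrac12(m_\al+m_{2\al}+1)$, this discrepancy is exactly the scalar $\Gamma(s+n/2)/\Gamma(n/2)$, which yields \eqref{6.7}. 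I expect the endpoint hypergeometric limit, and with it the correct reading of the sign condition on $\eta$ that selects the regular rather than the singular solution, to be the one genuinely delicate point; the Gamma-function bookkeeping in the last two steps is lengthy but mechanical.
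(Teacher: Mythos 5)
Your proposal follows the paper's own proof step for step: start from the exact hypergeometric formula \eqref{6.4}, cancel $\cosh^{l}t$ against $(2\cosh t)^{-\ell}$, evaluate the hypergeometric factor at the endpoint $x=1$, and then carry out the Gamma bookkeeping against \eqref{3.7} using $\tfrac{n}{2}=\tfrac{1}{2}(m_\al+m_{2\al}+1)$. The bookkeeping part is correct (I checked the cancellations; they are exactly the paper's). The genuine gap is in the step you yourself single out as the delicate one: the sign analysis is backwards. With the paper's convention $\la=\xi+i\eta$ one has $i\la=i\xi-\eta$ (the paper uses precisely this when it writes $-2i\la=-2i\xi+2\eta$), hence
\[
\Re(c-a-b)=\Re\bigl(i\la(H)\bigr)=-\eta(H).
\]
Under the stated hypothesis $\eta>0$ this real part is \emph{negative}, which is exactly the regime in which Gauss's endpoint theorem fails: as $x\to 1^{-}$ one has $F(a,b,c,x)\sim\frac{\Gamma(c)\Gamma(a+b-c)}{\Gamma(a)\Gamma(b)}\,(1-x)^{c-a-b}$, so $(2\cosh t)^{-\ell}\phi_{\la,\delta}(a_t\cdot\circ)$ has modulus of order $(\cosh t)^{2\eta(H)}\to\infty$ and the limit in \eqref{6.7} does not exist (for generic $\la$). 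The Gauss evaluation is legitimate exactly when $\eta<0$, equivalently $\Re\langle i\la,\al_0\rangle>0$, the same half-space in which the integral \eqref{3.6} for $\mathbf{c}(\la)$ converges and which the paper itself (correctly) imposes earlier in the section when deriving \eqref{6.2}. So your assertion that the hypothesis $\eta>0$ ``is exactly what places us in this convergence regime'' is the reverse of the truth; you have reproduced, rather than repaired, a sign slip that is present in the paper's own text.

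There is a second problem that a consistency check would have exposed, and which shows the issue is not merely a typo in the sign. Since $(2\cosh t)^{-\ell}=e^{-(i\la-\rho)(tH)}(1+e^{-2t})^{-\ell}$ and the last factor tends to $1$, whenever the limit exists (i.e.\ for $\eta<0$) it must coincide with $\lim_{t\to\infty}e^{-(i\la-\rho)(tH)}\phi_{\la,\delta}(a_t\cdot\circ)$, which by Theorem \ref{th5.3} and \eqref{6.2} equals $\mathbf{C}_e(\la)=\mathbf{c}(\la)$. This is compatible with the right-hand side of \eqref{6.7} only when $s=0$. The extra factor $\Gamma\bigl(s+\tfrac{n}{2}\bigr)/\Gamma\bigl(\tfrac{n}{2}\bigr)$ is precisely a normalization mismatch between the function described by \eqref{6.4}--\eqref{6.5} and the Eisenstein-integral normalization \eqref{6.1}: a direct computation on the hyperbolic disk ($m_\al=1$, $m_{2\al}=0$, $s=k$) shows that the Poisson integral \eqref{6.1} of $\langle\delta(k)v,v\rangle$ carries an additional $1/k!=\Gamma\bigl(\tfrac{n}{2}\bigr)/\Gamma\bigl(s+\tfrac{n}{2}\bigr)$ relative to the coefficient \eqref{6.5}, and its $(2\cosh t)^{-\ell}$-limit is $\mathbf{c}(\la)$ for every $k$. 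A complete proof therefore cannot consist of Gauss's theorem plus bookkeeping alone; it must also reconcile the normalizations of \eqref{6.1} and \eqref{6.4}--\eqref{6.5}, and without that step the theorem as stated is established only for the function defined by \eqref{6.4}--\eqref{6.5}, under the hypothesis $\eta<0$, not for the spherical function \eqref{6.1} under $\eta>0$.
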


So far we have studied the behavior of $ \phi_{\la, \delta} $ for large $ t. $ For behavior for small $ t $ we just use \eqref{6.4} and conclude
\[ \lim\limits_{t \rightarrow 0} \frac{\phi_{\la, \delta} (a_t \cdot \circ)}{\phi_{-\la, \delta} (a_t \cdot \circ)} = \frac{c_{\la, \delta}}{c_{-\la, \delta}}. \]

On the other hand as mentioned in \S 6
\[ \Phi_{s \la, \delta} (x) = \Phi_{\la, \delta} (x) \Gamma_{s, \la}, \]
where 
\[ \Gamma_{s,\la} v = \frac{\mathbf{C}_{s-1} (s\la)}{\mathbf{c}(\la)}v, \hspace{10ex} v \in V^M_{\delta}. \]
In the rank one case where $ \mathbf{C}_\sigma $ is a scalar this implies
\begin{equation}
\label{6.8}
\mathbf{C}_\sigma (-\la) = \frac{c_{-\la, \delta}}{c_{\la, \delta}} \, \mathbf{c} (\la).
\end{equation}

\begin{theorem}
\label{th6.2}
Formulas \eqref{6.2} and \eqref{6.8} determine the $ \mathbf{C} $ functions in the rank one case.
\end{theorem}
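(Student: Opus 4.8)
The plan is to exploit that in rank one the Weyl group $W = \{e, \sigma\}$ has order two, with $\sigma$ the reflection acting on $\fa^* \cong \RR$ by $\sigma \la = -\la$, and that (by Kostant, as recalled in \S 7) $\dim V^M_\delta = 1$, so each $\mathbf{C}_\sigma$ takes scalar values. Consequently the family $\{\mathbf{C}_\sigma\}_{\sigma \in W}$ of Theorem \ref{th5.3} collapses to the single scalar pair $\mathbf{C}_e, \mathbf{C}_\sigma$, and ``determining the $\mathbf{C}$ functions'' means exhibiting explicit meromorphic formulas for precisely these two.

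First I would record that \eqref{6.2} already settles $\mathbf{C}_e$: it asserts $\mathbf{C}_e(\la) = \mathbf{c}(\la)$, and $\mathbf{c}$ is the fully explicit meromorphic function of \eqref{3.7}. Thus $\mathbf{C}_e$ is known outright, and only $\mathbf{C}_\sigma$ remains.

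Next I would read \eqref{6.8} not as a single evaluation but as a functional equation. It expresses $\mathbf{C}_\sigma(-\la)$ as $\tfrac{c_{-\la,\delta}}{c_{\la,\delta}}\,\mathbf{c}(\la)$, where the factors $c_{\pm\la,\delta}$ are the explicit ratios of Gamma functions of \eqref{6.5} and $\mathbf{c}$ is again \eqref{3.7}. Since both sides are meromorphic on the one-dimensional space $\fa^*_\mathbf{c} \cong \CC$, I may substitute $\la \mapsto -\la$ to obtain
\[
\mathbf{C}_\sigma(\la) = \frac{c_{\la,\delta}}{c_{-\la,\delta}}\,\mathbf{c}(-\la),
\]
an identity of meromorphic functions whose right-hand side is assembled entirely from the known data \eqref{6.5} and \eqref{3.7}. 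This pins down $\mathbf{C}_\sigma$ on all of $\fa^*_\mathbf{c}$. Together with $\mathbf{C}_e = \mathbf{c}$ this exhausts $W$, which proves the theorem.

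The only step demanding care is the legitimacy of the substitution $\la \mapsto -\la$ in \eqref{6.8}. Here I would recall that \eqref{6.8} was obtained from the small-$t$ ratio of $\phi_{\la,\delta}$ and $\phi_{-\la,\delta}$ together with the intertwining relation $\Phi_{-\la,\delta} = \Phi_{\la,\delta}\,\Gamma_{\sigma,\la}$, all of whose ingredients depend meromorphically on $\la$; hence the relation holds as an equality of meromorphic functions and the substitution is valid wherever $c_{-\la,\delta}$ does not vanish identically. I expect no deeper analytic obstacle: the substantive work was already carried out in deriving \eqref{6.2} and \eqref{6.8}, and Theorem \ref{th6.2} is the observation that in rank one these two relations suffice precisely because $|W| = 2$ and $V^M_\delta$ is one-dimensional.
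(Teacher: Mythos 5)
Your proposal is correct and matches the paper's (implicit) argument: Theorem \ref{th6.2} is stated as an immediate consequence of the derivations of \eqref{6.2} and \eqref{6.8} that precede it, and your reading---$\mathbf{C}_e = \mathbf{c}$ explicitly from \eqref{6.2} and \eqref{3.7}, and $\mathbf{C}_\sigma$ recovered from \eqref{6.8} by the substitution $\la \mapsto -\la$, which suffices because $|W| = 2$ and $\dim V^M_\delta = 1$ make these the only two $\mathbf{C}$ functions and they are scalar-valued---is exactly what the paper intends. The only content you add beyond the paper is the routine justification that the substitution is legitimate, which is harmless since both sides of \eqref{6.8} are meromorphic in $\la$.
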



\begin{remark}
\label{rem6.3}
By \cite{H73}, Lemma 6.1 and Lemma 6.5 we have
\begin{equation}
\label{6.9}
\mathbf{C}_\sigma (-\la) = \int_N e^{-(i \la + \rho)(H(\bar{n}))} \delta (k(\bar{n})^{-1} m^*)  \, d\bar{n}
\end{equation}
where $ m^* \in W $ is $ -I $ on $ \fa. $ Thus \eqref{6.8} gives an evaluation of the unwieldy integral \eqref{6.9}. In the papers \cite{J76} and \cite{JW72}, \cite{JW77} Johnson and Wallach determined \eqref{6.9} by using a classification of $ \widehat{K}_M $ related to a parametrization from Kostant \cite{K69}. In each case they have a formula in the spirit of \eqref{6.8}. Their models for the $ \delta \in \widehat{K}_M $ are spaces of homogeneous harmonic polynomials on $ \fp, $ restricted by different conditions, according to the multiplicities $ m_\al $ and $ m_{2 \al}. $ This leads to a determinations of the integers s, r for each $ \delta $ in \eqref{6.4} (\cite{H74}, p. 336--337). See also \cite{C74} for the simplest cases. 
\end{remark}

\section{The case of higher rank}
Using a method of Schiffman \cite{S71} we shall now investigate the $ \mathbf{C} $-functions for $ X $ of higher rank. 

As in \cite{H73} we consider the endomorphism $ A (\la, \sigma) $ of $ V^M_\delta $ given by
\begin{equation}
\label{7.1}
A(\la, \sigma)v = \int_{\bar{N}_\sigma} e^{-(i\la + \rho) (H(\bar{n}))} \delta (m_\sigma k (\bar{n})) v \, d \bar{n},
\end{equation}
where $ m_\sigma $ is a representative of $ \sigma \in W $ in the normalizer $ M' $ of $ A $ in $ K. $ Under conjugation $ g \rightarrow mgm^{-1} $ by an $ m \in M $ we have $ (k(\bar{n}))^m = k(\bar{n}^m) $ so $ A (\la, \sigma) $ in \eqref{7.1} is independent of the choice of $ m_\sigma \in M' $ representing $ \sigma \in W. $ Let $ \sigma = \sigma_1 \ldots \sigma_p $ be a reduced expression of $ \sigma,  $ that is each $ \sigma_i $ is a reflection in the plane $ \al_i = 0  $ where $ \al_i $ is a simple root and $ p $ is as small as possible. 

Then by \cite{H73}, following a method by \cite{S71}, 
\begin{equation}
\label{7.2}
A (\la, \sigma) = A (\sigma^{(1)} \la, m_{\sigma_1}) \cdots A (\sigma^{(p)} \la, m_{\sigma_p}),
\end{equation}
where $ \sigma^{(q)} = \sigma_{q+1} \cdots \sigma_p. $

For each simple root $ \al $ consider the rank-one symmetric space $ G_\al / K_\al $ where $ G_\al $ is the analytic subgroup of $ G $ whose Lie algebra is generated by the root spaces $ \fg_\al $  and $ \fg_{-\al} $ and $ K_\al = G_\al \cap K. $ If $ \fa_\al = \mathbf{R}H_\al $ and $ A_\al = \exp \fa_\al, \ \bar{N}_\al = \bar{N}_{s_\al} $ then $ G_\al = K_\al A_\al \bar{N}_\al $ is an Iwasawa decomposition of $ G_\al $. If $ \mathbf{c}_\al $ is the $ \mathbf{c} $ function for $ G_\al / K_\al $ we have by \eqref{7.2} (for $ \delta $ trivial)
\begin{equation}
\label{7.3}
\mathbf{c}_\sigma (\la) = \prod_{1}^{p} \mathbf{c}_{\sigma_j} (\la_j)
\end{equation}
where $ \la_j = (\sigma^{(j)} \la) | \fa_{a_j}. $

\begin{lemma}
\label{lem7.1}
With $ \delta \in \widehat{K}_M $ arbitrary let $ V $ denote the $ K_\al M $ invariant subspace of $ V_\delta $ generated by $ V^M_\delta $ and $ V = \bigoplus\limits^l_{i=1} V_i $ a decomposition into $ K_\al M $-irreducible subspaces. Then $ l = l (\delta), $ the dimension of $ V^M_\delta $ and $ \dim (V^M_\delta \cap V_i) = 1 $ for each $ i. $
\end{lemma}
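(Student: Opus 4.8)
The plan is to combine two ingredients: elementary bookkeeping showing that $V^M=V^M_\delta$ splits along the decomposition $V=\bigoplus_i V_i$, and Kostant's rank-one multiplicity-one theorem — the source (via \cite{K69}) of the assertion $\dim V^M_\delta=1$ quoted at the start of this section — applied not to $G/K$ but to the rank-one subsymmetric space $G_\al/K_\al$.

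First I would record the group-theoretic preliminaries and reduce the claim. Since $M=Z_K(A)$ centralizes $\fa$, each $\Ad(m)$ ($m\in M$) preserves every restricted-root space, hence preserves $\fg_{\pm\al}$ and the subalgebra they generate; thus $M$ normalizes $G_\al$ and $K_\al=G_\al\cap K$, so $L:=K_\al M$ is a subgroup of $K$. Moreover $K_\al\cap M=Z_{K_\al}(A_\al)=:M_\al$: an element of $K_\al$ centralizing $A$ certainly centralizes $A_\al$, and conversely $M_\al$ centralizes $A_\al$ and, lying in $G_\al$, centralizes $\exp(\ker\al\cap\fa)$, hence all of $A$. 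Consequently $L/M=K_\al M/M\cong K_\al/M_\al$ as $K_\al$-spaces. Because $V^M_\delta\subseteq V\subseteq V_\delta$ we have $V^M=V^M_\delta$, and since each $V_i$ is $M$-stable, $V^M_\delta=\bigoplus_i V_i^M$, so $\sum_i\dim V_i^M=\dim V^M_\delta=l(\delta)$. It therefore suffices to prove $\dim V_i^M=1$ for every $i$; summing then yields $l=l(\delta)$ and the asserted $\dim(V^M_\delta\cap V_i)=1$.

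The lower bound $\dim V_i^M\ge 1$ is immediate: the $L$-equivariant projection $P_i:V\to V_i$ carries $M$-fixed vectors to $M$-fixed vectors, and since $V$ is the $L$-span of $V^M_\delta$ we have $V_i=P_i(V)=L\cdot P_i(V^M_\delta)$, forcing $P_i(V^M_\delta)\ne 0$; thus $0\ne P_i(V^M_\delta)\subseteq V_i^M$. The essential point is the upper bound $\dim V_i^M\le 1$, and this is where Kostant enters. By Frobenius reciprocity, $\dim V_i^M$ equals the multiplicity of $V_i$ in the $L$-representation $\mathrm{Ind}_M^L\mathbf{1}$. Restricted to $K_\al$, this induced module is $\mathrm{Ind}_{M_\al}^{K_\al}\mathbf{1}=L^2(K_\al/M_\al)$, which by Kostant's theorem for the rank-one space $G_\al/K_\al$ is multiplicity-free as a $K_\al$-representation (equivalently $\dim U^{M_\al}\le 1$ for each $K_\al$-irreducible $U$). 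If $V_i$ occurred with multiplicity $m$ in $\mathrm{Ind}_M^L\mathbf{1}$, then upon restriction to $K_\al$ any single $K_\al$-irreducible constituent of $V_i$ would occur at least $m$ times in $L^2(K_\al/M_\al)$; multiplicity-freeness forces $m\le 1$. Hence $\dim V_i^M\le 1$, and combined with the lower bound every $\dim V_i^M=1$.

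I expect the upper bound to be the main obstacle: one must recognize that the operative multiplicity-one phenomenon is for the pair $(K_\al,M_\al)$ of the rank-one subsystem, not for $(K,M)$ nor even for $(K_\al M,M)$ directly, and then transfer it across the inclusion $K_\al\subset K_\al M$ by restriction. This subtlety is real, since $M$-invariance and $M_\al$-invariance genuinely differ on the $V_i$ — the extra cosets in $M/M_\al$ permute the $K_\al$-constituents of a given $V_i$, so one cannot simply replace $M$ by $M_\al$. The Frobenius-plus-restriction argument is precisely what sidesteps this difficulty.
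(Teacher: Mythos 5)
Your proof is correct, and the main thing to report is that the paper itself contains no argument to compare against line by line: its entire proof is the citation ``See \cite{H73}, p.~469.'' The argument at that reference rests on the same essential input as yours --- Kostant's theorem \cite{K69} that for a rank-one group every irreducible representation of the maximal compact subgroup has at most a one-dimensional space of vectors fixed by the centralizer of $A$ --- which is also the fact the present paper invokes at the start of its rank-one section and applies to the subgroups $G_{\sigma_j}$ in \eqref{7.4}--\eqref{7.6}. Your structural preliminaries are all sound: $\Ad (M)$ preserves each restricted root space, so $M$ normalizes $G_\al$ and $K_\al$ and $L = K_\al M$ is a compact group; $K_\al \cap M$ equals the centralizer $M_\al$ of $A_\al$ in $K_\al$, because the Lie algebra of $G_\al$ is centralized by $\ker \al \subset \fa$; hence $L/M \cong K_\al / M_\al$ as $K_\al$-spaces. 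The reduction ($V^M_\delta = \bigoplus_i V_i^M$, plus the projection argument giving $\dim V_i^M \geq 1$) is routine and correctly executed, and your Frobenius-plus-restriction step for the upper bound is exactly right: the multiplicity of $V_i$ in $L^2(L/M)$ is $\dim V_i^M$, and $L^2(L/M)$ is multiplicity-free as an $L$-module because its restriction to $K_\al$, namely $L^2(K_\al/M_\al)$, is already multiplicity-free by Kostant. In effect you show that $(K_\al M, M)$ is a Gelfand pair because its coset space coincides with that of the Gelfand pair $(K_\al, M_\al)$; and as you note, this detour is genuinely necessary, since the naive bound $\dim V_i^M \le \dim V_i^{M_\al}$ is useless ($V_i|_{K_\al}$ may have several constituents carrying $M_\al$-fixed vectors). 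The one hypothesis left tacit --- that $G_\al$ is a closed connected semisimple subgroup of real rank one with maximal compact subgroup $K_\al$, so that Kostant's theorem applies to $G_\al/K_\al$ --- is standard and is presupposed throughout the higher-rank section of the paper, so invoking it is fair rather than a gap.
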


\begin{proof}
See \cite{H73}, p. 469.
\end{proof}

In this lemma take $ \al = \sigma_j $ and let $ \delta_i $ denote the representation of $ K_{\sigma_j} M $ on $ V_i $ given by $ \delta. $ We choose a unit vector $ v_i $ in $ V_\delta^M \cap V_i. $ Since $ K_{\sigma_j} $ maps $ V^M_\delta \cap V_i $ into itself the operator $ A(\sigma^{(j)}\la, m_{\sigma_j}) $ does too and operates by multiplication with the scalar
\begin{equation}
\label{7.4}
\int_{\bar{N}_{\sigma_j}} \langle \delta_i (m_{\sigma_j} k (\bar{n}) v_i, v_i) \rangle e^{- (i \la_j + \rho_{\sigma_j})(H(\bar{n}))} \, d \bar{n},
\end{equation}
where $ \la_j = (\sigma^{(j)}\la) | \fa_{\sigma_j} $ Here we have used the fact that the restriction $ \rho | \fa_{\sigma_j} $ equals the $ \rho $-function for $ G_{\sigma_j} / K_{\sigma_j} $ \cite{H84} (34) page 446. By \cite{H73}, Lemma 6.5 this number is equal to 
\begin{equation}
\label{7.5}
\int_{\bar{N}_{\sigma_j}} e^{-(i \la_j + \rho_{\sigma_j})(H(\bar{n}))} \langle \delta_i (k(\bar{n})^{-1} m_{\sigma_j})  v_i, v_i \rangle \, d \bar{n},
\end{equation}
which we have calculated in \eqref{6.8}, \eqref{6.9}. Thus the value of \eqref{7.5} equals 
\begin{equation}
\label{7.6}
\frac{c_{-\la_j, \delta_i}}{c_{\la_j, \delta_i}} \mathbf{c}_{\sigma_j} (\la_j).
\end{equation}

The root multiplicities are now the ones in $ G_{\sigma_j} $ and the integers $ r $ and $ s $ are the ones which belong to $ \delta_i. $ Also
\begin{equation}
\label{7.7}
V^M_\delta = \bigoplus\limits_i \mathbf{C} v_i.
\end{equation}

For a fixed $ j $ this represents diagonalization of $ A (\sigma^{(j)}\la, m_{\sigma_j}), \ 1 \leq i \leq l (\delta). $ The determinant of this endomorphism of $ V^M_\delta $ is then 
\begin{equation}
\label{7.8}
\prod_{i = 1}^{l  (\delta)} \left( \frac{c_{-\la_j, \delta_i}}{c_{\la_j, \delta_i}} \mathbf{c}_{\sigma_j} (\la_j) \right).
\end{equation}

Changing to another $ j $ will change the basis $ (v_i) $ giving new representations $ \delta (i,j). $ Consequently, by \eqref{7.3}
\begin{equation}
\label{7.9}
\det (A(\la, \sigma)) |_{V^M_\delta} = \mathbf{c}_\sigma (\la)^{l(\delta)} \prod_{j = 1}^{p} \left( \prod_{i = 1}^{l (\delta)} \frac{c_{-\la_j, \delta(i,j)}}{c_{\la_j, \delta (i,j)}} \right).
\end{equation}

On the other hand, the adjoint of $ A(\bar{\la}, \sigma) $ is given by 
\[ (A(\bar{\la}, \sigma))^* v = \int_{\bar{N}_\sigma} e^{-(-i \la + \rho) (H(\bar{n}))} \delta(k(\bar{n})^{-1} m_\sigma^{-1}) \, d \bar{n} \, v, \]
which by \cite{H73} equals 
\[ \frac{1}{\mathbf{c}(-\la)} \mathbf{c}_\sigma (-\la) \mathbf{C}_{\sigma^{-1}} (-\sigma \la) v. \]
This by \eqref{7.9} determines the determinant of $ \mathbf{C}_\sigma (\la). $

Formula \eqref{7.9} has some resemblance to Theorem 5 in \cite{C74} but it is not clear whether there is a connection. 

We do not deal with the problem of determining $ C_\sigma $ itself but observe that the Hilbert-Schmidt norm is given by \cite{H73}:
\[ ||C_\sigma (\la) ||^2 = |\mathbf{c} (\la)|^2 \, l (\delta). \]

\end{document}